\newcommand{\textoverline}[1]{$\overline{\mbox{#1}}$}
 \newtheorem{theorem}{{\rm T\sc heorem}}[section]
 \newtheorem{lemma}[theorem]{{\rm L\sc emma}}
 \newtheorem{proposition}[theorem]{{\rm P\sc roposition}}
 \newtheorem{definition}[theorem]{{\rm D\sc efinition}}
 \theoremstyle{definition}
 \newtheorem{remark}[theorem]{{\rm R\sc emark}}
  \newtheorem*{acks}{Acknowledgments}
\DeclareMathOperator{\aut}{Aut}
\DeclareMathOperator{\pic}{Pic}
\DeclareMathOperator{\ns}{NS}
\DeclareMathOperator{\numer}{Num}
\begin{document}
 \title {Singular rational curves on Enriques surfaces}
 \author[Simone Pesatori]{Simone Pesatori}
 \address{Università degli Studi Roma Tre\\
         Largo San Leonardo Murialdo, 1\\
         Rome\\
         Italy}

 \begin{abstract} We show that for every $k\in\mathbb{Z}_+$, with $k\equiv_4 1$, the very general Enriques surface admits rational curves of arithmetic genus $k$ with $\phi$-invariant equal to 2.
\end{abstract}
 \maketitle
 \pagestyle{myheadings}\markboth{\textsc{ Simone Pesatori }}{\textsc{Singular rational curves on Enriques surfaces}}
\section{Introduction}

This work addresses the topic of singular rational curves on Enriques surfaces. It is well-known that the only curves with negative self-intersection that an Enriques surface can admit are the smooth $(-2)$-curves, which turn out to be rational. The Enriques surfaces admitting such curves are called \textit{nodal} (or \textit{special}) and they form a divisor in the ten-dimensional moduli space of Enriques surfaces, whose geometry has been studied by Dolgachev and Kond\textoverline{o} in \cite{DK}, where the authors prove its rationality. It is classical that every Enriques surface is equipped with some genus 1 fibrations over a base isomorphic to $\mathbb{P}^1$, and that their singular fibers are generically genus 1 curves with a node, whence rational. Regarding higher genera, Hulek and Sch\"utt in \cite{HS} show that for every nonnegative integer $m\in\mathbb{Z}_+$ there exists an irreducible nine-dimensional family $\mathcal{F}_m$ in the moduli space of K3 surfaces parametrizing K3's covering an Enriques surface admitting a rational curve of arithmetic genus $m$, called Enriques surfaces of base change type (or \textit{$m$-special} Enriques surfaces). More precisely, by using lattice theory they prove that every Enriques surface covered by a K3 surface in $\mathcal{F}_m$ has an elliptic pencil with a rational bisection of arithmetic genus $m$ that split in the K3 cover in two smooth $(-2)$-curves. The author in \cite{Pesa} gives a lattice-free purely geometrical proof of the existence of the Enriques surfaces of base change type, allowing to show that for the general Enriques surface covered by a K3 surface in $\mathcal{F}_m$ the mentioned bisections are nodal. Moreover, in the same work the author shows that for every $m$ and for every $k\in\mathbb{Z}_+$, the very general Enriques surface $Y_m$ covered by a K3 surface in $\mathcal{F}_m$ admits nodal rational curves of arithmetic genus $(4k^2-4k+1)m+4k^2-4k$ and that for fixed $k$, the set of these bisections injects as a rank 8 subgroup in $\aut(Y_m)$. Even regarding higher genera, a result of Baltes (\cite[Theorem 1.1]{Bal}) states that every K3 surfaces with a genus 1 pencil admits rational curves of arbitrary large arithmetic genus: since any Enriques surface is covered by such a K3 surface, a consequence of the result of Baltes is that any Enriques surface also contains a rational curve of arbitrary large arithmetic genus. In this context, Galati and Knutsen in \cite{GK} state a necessary condition for the existence of a rational curve on the very general Enriques surface: they prove that every rational curve on the very general Enriques surface $Y$ is 2-divisible in $\numer(Y)$. The authors wonder what are the conditions on a line bundle on $Y$ to admit a rational member.\par
This work aims to refine the result of Baltes and to partially answer the question posed by Galati and Knutsen. In particular, once noticed that the possible arithmetic genera of a rational curve on the very general Enriques surface $Y$ are congruent to 1 modulo 4, we prove the following Theorem, stating that every possible arithmetic genus is "hit" by some rational curves on $Y$.

\begin{theorem}\label{main}
    For every $k\in\mathbb{Z}_+$ with $k\equiv_4 1$, the very general Enriques surface $Y$ admits rational curves of arithmetic genus $k$ and $\phi=2$.
\end{theorem}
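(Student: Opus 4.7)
The plan is to exhibit the desired rational curve as an irreducible member of the linear system of a line bundle $L$ on $Y$ with $L^2 = 2(k-1) = 8m$ (writing $k = 4m+1$), $\phi(L) = 2$, and $L$ $2$-divisible in $\numer(Y)$ (as required by Galati--Knutsen). A natural candidate is $L \equiv 2F_1 + 2m F_2$ numerically, where $F_1, F_2$ are half-fibers of two elliptic pencils satisfying $F_1 \cdot F_2 = 1$. A direct intersection computation gives $L^2 = 8m$, $p_a(L) = 4m+1 = k$, and $L = 2(F_1 + mF_2)$ in $\numer(Y)$; moreover $\phi(L) = 2$, because $\langle F_1, F_2 \rangle \cong U$ is a unimodular direct summand of $\numer(Y)$, so $L \cdot F$ is even for every $F \in \numer(Y)$, while $L \cdot F_2 = 2$ realizes the minimum.

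The task thus reduces to producing an irreducible rational member of $|L|$ on the very general Enriques surface. Note that the expected dimension of the Severi variety of $k$-nodal curves in $|L|$ on a fixed $Y$ equals $\dim|L| - k = -1$, so existence is not automatic. I would proceed by degeneration: specialize $Y$ in a $1$-parameter family $\mathcal{Y} \to \Delta$ to an Enriques surface $Y_0$ on which $|L_{Y_0}|$ contains an explicit reducible curve $C_0$ with all components rational --- for instance a union of nodal fibers of $|2F_1|$ and $|2F_2|$ together with an auxiliary rational curve (a $(-2)$-curve on a nodal $Y_0$, or a rational bisection inherited from the families of \cite{HS,Pesa}) --- arranged so that $C_0$ has exactly $\delta = k$ nodes as its only singularities and its normalization is $\mathbb{P}^1$. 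Then apply a regeneration/smoothing theorem, in the spirit of Tannenbaum, Chen, and the Galati--Knutsen criterion exploited in \cite{Pesa}, to simultaneously smooth the $k$ nodes of $C_0$ while deforming $Y_0$ out of the special locus, producing an irreducible nodal $C_t \in |L_t|$ of geometric genus $0$ on the general $\mathcal{Y}_t$.

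The main obstacle is precisely the negative expected dimension of the Severi variety: the $k$ nodes cannot be smoothed with the base surface held fixed, so the construction must be tuned so that smoothing them forces $Y_0$ to move out of every positive-codimension locus in the Enriques moduli --- in particular out of the nodal divisor of \cite{DK}. Concretely, this amounts to checking that the tangent space at $[(Y_0, C_0)]$ of the universal Severi variety surjects onto the tangent space to the Enriques moduli, i.e.\ that the obstruction map from $H^0$ of the conductor of the normalization of $C_0$ to $H^1(T_{Y_0})$ vanishes. Carrying out this check uniformly in $m$, starting from a combinatorial type of $C_0$ with exactly $k$ nodes and $\mathbb{P}^1$ normalization, is where I expect the main technical difficulty to lie.
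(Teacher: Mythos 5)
Your numerical setup is sound and agrees with where the paper ends up: the class $L\equiv 2(F_1+mF_2)$ with $F_1\cdot F_2=1$ is (up to relabelling) exactly the class of the curve $C_{Y,m}\equiv 2E_2+2(m-1)E_1$ the paper produces, your computations of $p_a$, of $2$-divisibility and of $\phi(L)=2$ are correct, and you correctly isolate the crux, namely the expected dimension $\dim|L|-k=-1$ of the Severi variety on a fixed $Y$. But the step you defer --- smoothing the nodes of a degenerate member while forcing $Y_0$ to move in Enriques moduli, via surjectivity of the tangent map of the universal Severi variety onto $H^1(T_{Y_0})$ --- is not a technical verification left for later; it is the whole theorem, and as formulated it cannot be closed by the tools you invoke. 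Since $h^{2,0}(Y)=0$, every class in $\numer(Y)$ remains algebraic over the entire $10$-dimensional moduli space, so the universal genus-$0$ Severi variety in the fixed class $L$ has expected dimension $10-1=9$ over moduli: there is no Enriques analogue of the ``$+1$'' gain (reduced deformation theory/semiregularity) that powers Tannenbaum--Chen type regeneration on K3 surfaces, hence no dimension count can force your family of stable maps to dominate moduli, and the needed surjectivity is precisely what surface-level deformation theory obstructs --- this obstruction is the mechanism behind the Galati--Knutsen $2$-divisibility theorem (Theorem \ref{galknu}) you cite. There are also unresolved details in the degeneration itself: the natural configurations you suggest (one nodal fiber of $|2F_1|$ plus $m$ nodal fibers of $|2F_2|$) have $5m+1$ nodes, not $k=4m+1$, and ``a reducible curve whose normalization is $\mathbb{P}^1$'' is not coherent --- one is forced to work with genus-$0$ stable maps rather than $k$-nodal members of $|L|$, which you never do.

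The idea your proposal is missing is the paper's transfer of the entire problem to the K3 side, where a genuine dimension gain is available. Via the Horikawa model (Theorem \ref{modelbarth}), the K3 cover $X$ is a double cover $\pi:X\rightarrow Q\cong\mathbb{P}^1\times\mathbb{P}^1$ branched along a $(4,4)$-curve $B$, and rationality of $C_Y$ translates (Proposition \ref{c''}) into the condition that $C_Q\in|\mathcal{O}_Q(1,m-1)|$ belongs to the logarithmic Severi variety $V_{0,[2,2m-1]}$ of curves tangent to $B$ to even order at all but two points, a variety of dimension $0$ when nonempty (Lemma \ref{valpha}). Nonemptiness is seeded by the Hulek--Sch\"utt $m$-special surfaces, whose split $(-2)$-curves give a reducible genus-$0$ stable map $s_1+r_1$ over a special branch curve $B_m$ (Proposition \ref{nonempty}); the Chen--Gounelas--Liedtke bound $\dim M\geq p+\dim\mathcal{V}$ (Theorem \ref{cgl}), which is specific to families of K3 surfaces, then regenerates this stable map along a pencil of branch curves to \emph{every} $(4,4)$-curve, in particular to the $\sigma$-invariant ones covering the very general Enriques surface; since the very general Enriques K3 carries no $(-2)$-curves, the resulting $C_{Q,m}$ does not split under $\pi$, and its preimage descends to an irreducible rational curve of genus $4m-3$ with $\phi=2$. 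Your blueprint contains no substitute for this K3-specific input, so as it stands the proposal is a correct reduction to a statement that the proposed method cannot prove.
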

Here $\phi$ is an invariant for a nef divisor $H$ on an Enriques surface, introduced by Cossec in \cite{Co}: $2\phi$ is the minimum of the intersection between $H$ and a genus 1 pencil.
\par
A consequence of the result given by Galati and Knutsen in \cite{GK} is that a rational curve on the very general Enriques surface cannot have $\phi=1$. On the other hand, all the curves found by Hulek and Sch\"utt in \cite{HS} are bisections for an elliptic pencil, whence they have $\phi=1$. We will see that two of these bisections deform together to give rise to an irreducible rational curve with $\phi=2$ on the very general Enriques surface. We will prove the phenomenon by looking at the deformations of the K3's that cover the Enriques surfaces of base change type and the very general Enriques surfaces. In particular, we exploit the "regeneration" results given by Chen, Gounelas and Liedtke in \cite{CGL} about curves on K3 surfaces. We show that the union of two suitable rational curves on $X_m\in\mathcal{F}_m$ deform to an irreducible rational curve on the very general K3 surface with an Enriques involution. Actually, our proof holds for every K3 surface that is a double covering of a smooth quadric (called hyperelliptic K3 surface). \par
In Section 2 we review the basic notions about Enriques surfaces and K3 surfaces, we describe the main features of some particular divisors on an Enriques surface and we analyze the model of a K3 surface with an Enriques involution as double cover of a smooth quadric in $\mathbb{P}^3$, called Horikawa model and presented by Barth, Hulek, Peters and Van de Ven in \cite{BHPV}. In particular, we explore the geometry of the Enriques surfaces of base change type by reading them through the mentioned tools. In Section 3 we give the notion of logarithmic Severi variety of curves on surfaces, that parametrize curves in a fixed linear system with given tangency conditions to a fixed curve. Their properties of nonemptiness, irreducibility and the computation of their dimension have been deeply analized by Dedieu in \cite{De}, and his results are based on the works of Caporaso and Harris (see for example \cite{CH}). Once pointed out the conditions that a curve on the very general Enriques surface has to satisfy to be rational, we exploit the aforementioned regeneration results due to Chen, Gounelas and Liedke to prove Theorem \ref{main}.

 \begin{acks}This work is the continuation of a part of my PhD Thesis and I want to thank my advisor Andreas Leopold Knutsen for the fruitful conversations which inspired this paper. I would also like to thank Concettina Galati for her suggestions. \end{acks}

\section{Generalities on Enriques surfaces \label{sec1}}

We recall the basics about K3 surfaces and Enriques surfaces. As general references the reader might consult \cite{BHPV} or \cite{CD}. 

\begin{definition}
    A smooth projective surface $X$ is called \textit{K3 surface}
if $X$ is (algebraically) simply connected with trivial canonical bundle $\omega_X\cong\mathcal{O}_X$.\\
An \textit{Enriques surface} $Y$ is a quotient of a K3 surface $X$ by a fixed point free involution $\tau$. Such an involution is also called \textit{Enriques involution}.
\end{definition}
For any Enriques surface $Y$ there is two-torsion in $\ns(Y)$ represented by the canonical divisor $K_Y$. The quotient $\ns(Y)_f$ of $\ns(Y)$ by its torsion subgroup is an even unimodular lattice, which is isomorphic to the so-called \textit{Enriques lattice}: \begin{center}
    $\numer(Y)=\ns(Y)_f\cong U\oplus E_8(-1)$.
\end{center}
The Picard rank of any Enriques surface is equal to 10.

\begin{definition}\label{verygenenr}
    We say that an Enriques surface $Y$ is \textit{Picard very general} if its universal covering $X$ is such that \begin{center}
        $\ns(X)\cong U(2)+E_8(-2)$.
    \end{center}
\end{definition}
\begin{remark}
    An Enriques surface $Y$ is Picard very general if and only if the Picard rank of its universal covering $X$ is equal to $10$.
\end{remark}
K3 and Enriques surfaces are the only surfaces that may admit more than one genus 1 pencil.
It is well-known that every genus 1 pencil on an Enriques surface has exactly two fibers of multiplicity two, called \textit{half-fibers}. The canonical divisor of $Y$ can be represented as the difference of the supports of the half-fibers of a genus 1 pencil: if $F$ is a genus 1 pencil of $Y$ and \begin{center}
    $2E_1\sim F$ and $2E_2\sim F$, then $K_Y\sim E_1-E_2$.
\end{center}

Ohashi in \cite{Oa} has classified K3 surfaces with Picard number $\rho=11$ and an Enriques involution. The Enriques quotient of such K3 surfaces are not Picard very general. Hulek and Sch\"utt in \cite{HS} performed a deep analysis of one type of such K3 surfaces, that they call Enriques surfaces of base change type. The following Theorem
 collects their results that we shall need in this work.
 \begin{theorem}[Hulek, Sch\"utt]\label{hs}
    For every nonnegative integer $m\in\mathbb{Z}_+$, there exists an irreducible nine-dimensional family $\mathcal{F}_m$ of K3 surfaces such that, for every $X_m\in\mathcal{F}_m$, \begin{center}
    $\ns(X_m)\cong U\oplus E_8(-2)\oplus <-4(m+1)>$. 
\end{center}Moreover, $X_m$ covers an Enriques surface of base change type $Y_m$ admitting a rational curve $B_{Y,m}$ of arithmetic genus $m$ which splits in $X_m$ in two smooth $(-2)$-curves $s_1$ and $s_2$ with $s_1\cdot s_2=2m$. Moreover, there exists an elliptic pencil $|F|$ on $Y_m$ such that $B_{Y,m}\cdot F=2$.
\end{theorem}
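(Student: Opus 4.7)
The plan is to follow the lattice-theoretic strategy of Hulek and Sch\"utt: first construct the family $\mathcal{F}_m$ via the Torelli theorem, then endow each K3 surface with an Enriques involution, and finally exhibit the two $(-2)$-classes descending to the rational curve $B_{Y,m}$ and the elliptic pencil $|F|$.

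The lattice $M_m := U \oplus E_8(-2) \oplus \langle -4(m+1)\rangle$ is even of rank $11$ and signature $(1,10)$. By Nikulin's criteria on discriminant forms it admits a primitive embedding into the K3 lattice $\Lambda_{K3} := U^{\oplus 3} \oplus E_8(-1)^{\oplus 2}$, with orthogonal complement $T_m$ of signature $(2,9)$. Surjectivity of the period map then yields an irreducible family of $M_m$-polarised K3 surfaces of dimension $20 - 11 = 9$, which is the desired $\mathcal{F}_m$; for a general $X_m \in \mathcal{F}_m$ one has $\ns(X_m) \cong M_m$. To endow $X_m$ with an Enriques involution, I would invoke the classical criterion of Nikulin: a fixed-point-free involution exists iff $\ns(X)$ contains a primitive copy of $U(2) \oplus E_8(-2)$ satisfying a discriminant-theoretic non-existence condition on $(-2)$-vectors. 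A primitive embedding $U(2) \oplus E_8(-2) \hookrightarrow M_m$ can be constructed explicitly, with the $\langle -4(m+1)\rangle$ summand contributing the anti-invariant piece; the required vanishing is a short discriminant-form check, and yields the Enriques quotient $\pi\colon X_m \to Y_m$.

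For the geometric content, fix generators $e, f$ of $U$ (with $e^2 = f^2 = 0$, $e \cdot f = 1$) and $\alpha$ of $\langle -4(m+1)\rangle$. A direct calculation produces two $(-2)$-classes $s_1, s_2 \in \ns(X_m)$, exchanged by the Enriques involution $\tau^\ast$ and with $s_1 \cdot s_2 = 2m$; their sum is $\tau^\ast$-invariant and descends to $\pi^\ast B_{Y,m}$ on $Y_m$, yielding
\[ B_{Y,m}^2 \;=\; \tfrac{1}{2}(s_1 + s_2)^2 \;=\; \tfrac{1}{2}(4m-4) \;=\; 2m - 2, \]
whence $p_a(B_{Y,m}) = m$ by adjunction on $Y_m$ (using $B_{Y,m} \cdot K_{Y_m} = 0$ since $K_{Y_m}$ is $2$-torsion). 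The elliptic pencil $|F|$ is obtained by descending a $\tau^\ast$-invariant primitive isotropic class built from the $U$ and $\langle -4(m+1)\rangle$ summands, normalised so that $F \cdot B_{Y,m} = 2$.

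The main obstacle is promoting these numerical classes to honest smooth irreducible $(-2)$-curves and a rational $B_{Y,m}$. Effectivity of $s_i$ follows from Riemann-Roch, which gives $\chi(\mathcal{O}_{X_m}(s_i)) = 1$ and hence $h^0(s_i) + h^0(-s_i) \geq 1$; the choice of ample class then pins down exactly one of $\pm s_i$ as effective. Ruling out reducible or non-reduced representatives, however, genuinely requires the genericity of $X_m \in \mathcal{F}_m$: on the very general member, the only $(-2)$-classes in $\ns(X_m)$ are those forced by the lattice $M_m$, so no unexpected decomposition $s_i = D_1 + D_2$ with $D_j$ effective and $D_j \cdot s_i < 0$ can occur. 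Once $s_1, s_2$ are honest smooth rational curves, the image $B_{Y,m} = \pi(s_1) = \pi(s_2)$ is automatically rational as the image of $\mathbb{P}^1$, and the existence of the genus-$1$ pencil from the descended nef primitive isotropic class is a standard fact for Enriques surfaces.
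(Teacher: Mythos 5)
The paper itself gives no argument for this theorem: its proof is the citation to \cite[Section 3]{HS}, and there the route is constructive rather than abstract. Hulek and Sch\"utt build $X_m$ as a quadratic base change of a rational elliptic surface; the Enriques involution is the deck transformation of the base change composed with a fibrewise involution, $s_1$ and $s_2$ are honest \emph{sections} of the resulting jacobian elliptic fibration, interchanged by construction, and $B_{Y,m}=f(s_1\cup s_2)$ is automatically an irreducible rational bisection of the induced genus 1 pencil, the $2m$ points of $s_1\cap s_2$ descending to the $m$ singular points that give $p_a(B_{Y,m})=m$ and $B_{Y,m}\cdot F=2$. So smoothness, irreducibility and rationality of $s_1,s_2$ --- exactly the points your proposal defers to the end --- come for free in their approach. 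Your Torelli/Nikulin skeleton is a legitimate alternative in outline (the dimension count $20-11=9$ and the identification of $s_1-s_2$ with the rank-one anti-invariant part are correct in spirit), but two of its steps are genuinely broken.

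First, the ``direct calculation'' of $s_1,s_2$ cannot succeed in the coordinates you set up. If $\tau^*$ acted as $+1$ on the displayed summand $U\oplus E_8(-2)$ and as $-1$ on the generator $\alpha$ of $\langle -4(m+1)\rangle$, then $s_1-s_2$, being anti-invariant, would equal $c\alpha$, and $(s_1-s_2)^2=-4-2\,s_1\cdot s_2=-4(m+1)$ forces $c=\pm 1$; then $-2=s_1^2=(s_2\pm\alpha)^2=-2\pm 2\,s_2\cdot\alpha-4(m+1)$ gives $s_2\cdot\alpha=\pm 2(m+1)$, while every $x\in U\oplus E_8(-2)\oplus\mathbb{Z}\alpha$ satisfies $x\cdot\alpha\in 4(m+1)\mathbb{Z}$ --- a contradiction. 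The point is that the invariant lattice of an Enriques involution is $f^*\numer(Y_m)\cong U(2)\oplus E_8(-2)$, all of whose squares are divisible by $4$; it is \emph{not} a direct summand of $\ns(X_m)$. Rather, $\ns(X_m)$ is an index-$2$ overlattice of $\bigl(U(2)\oplus E_8(-2)\bigr)\oplus\langle s_1-s_2\rangle$ (compare discriminants: $2^{10}\cdot 4(m+1)$ versus $2^{10}(m+1)$), and $s_1$, $s_2$ are exactly the glue vectors $\frac{1}{2}\bigl((s_1+s_2)\pm(s_1-s_2)\bigr)$; your hypothesis of a diagonal action on the displayed splitting excludes them. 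Second, the reduction of irreducibility to very-generality is empty: $\ns$ is constant ($\cong M_m$) along $\mathcal{F}_m$, and $M_m$ is hyperbolic of rank $11$, so \emph{every} member carries infinitely many $(-2)$-classes; Riemann--Roch makes the class $s_i$ effective but says nothing against decompositions $s_i=D_1+D_2$ into effective classes, which must be excluded by an actual argument --- for instance by choosing $s_i$ with $s_i\cdot e=1$ for the nef isotropic class $e$ of the fibration and working with the Mordell--Weil structure, which in substance is the Hulek--Sch\"utt construction itself. (Minor, in the same vein: the criterion you invoke is Keum's, and it is a condition on a primitive embedding $T_X\hookrightarrow U\oplus U(2)\oplus E_8(-2)$ with no $(-2)$-vectors in the orthogonal complement, not merely on $\ns(X)$ containing a primitive copy of $U(2)\oplus E_8(-2)$.)
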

\begin{proof}
    See \cite[Section 3]{HS}.
\end{proof}
We shall say that $Y_m$ is an Enriques surface of base change type (or an $m$-special Enriques surfaces), that $B_{Y,m}$ is an $m$-special curve for $Y_m$ and that $|F|$ is an $m$-special elliptic pencil on $Y_m$. The family $\mathcal{F}_0$ is precisely the nine-dimensional family of K3 surfaces covering the Enriques surfaces which admit smooth $(-2)$-curves (called \textit{special} or \textit{nodal}). \par
In \cite{Pesa} the author gives a purely geometrical lattice free proof of the existence of the Enriques surfaces of base change type, that allows to prove that $B_{Y,m}$ is nodal (meaning that it has just simple nodes as singularities) and to show the existence of many other nodal rational curves of arbitrary large arithmetic genera, being bisections for some elliptic pencil, on every $m$-special Enriques surface $Y_m$.

\subsection{\rm L\sc inear systems on \rm E\sc nriques surfaces}

We give the definition of $\phi$-invariant for a nef line bundle of an Enriques surface, introduced by Cossec in \cite{Co}.
\begin{definition}
    Let $H\in\pic(Y)$ be a nef divisor of $Y$. Then, the \textit{$\phi$-invariant of H} is defined to be \begin{center}
        $\phi(H):=\min\{E\cdot H|E^2=0,E>0\}\in\mathbb{Z}$.
    \end{center}
\end{definition}
Here $E$ may be taken to be a half-fiber of a genus 1 pencil of $Y$, whence $2\phi$ is the minimum of the intersection between $H$ and a genus 1 pencil.

\begin{remark}\label{phi1}
   Let $Y_m$ be an Enriques surface of base change type and $B_{Y,m}\subset Y_m$ be an $m$-special curve. Since $B_{Y,m}\cdot F=2$ for an elliptic pencil $|F|$ on $Y_m$, we have that the $\phi(B_{Y,m})=1$. 
\end{remark}

The next result due to Galati and Knutsen (see \cite[Theorem 1.1]{GK}) states a necessary condition for the existence of a rational curve in the very general Enriques surface. Here very general just means that there exists a set that is the complement of a countable union of proper Zariski-closed subsets in the moduli space of Enriques surfaces satisfying the given conditions. 
\begin{theorem}\label{galknu}(Galati, Knutsen)
    Let $Y$ be a very general Enriques surface. If $C_Y\subset Y$ is an irreducible rational curve, then $C_Y$ is 2-divisible in $\numer(Y)$.
\end{theorem}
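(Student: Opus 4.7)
My plan is to study the restriction of the étale double cover $\pi\colon X\to Y$ over an irreducible rational curve $C_Y \subset Y$ and to extract $2$-divisibility from the combination of (i) the simple-connectedness of $\mathbb{P}^1$ and (ii) the very general Picard lattice of the covering K3. The first move is to consider the scheme-theoretic preimage $\widetilde C := \pi^{-1}(C_Y) = X \times_Y C_Y$, which is étale of degree $2$ over $C_Y$.

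The first key step is to show that $\widetilde C$ splits as $C_1 \cup C_2$ with $\tau(C_1) = C_2$, where $\tau$ is the Enriques involution on $X$. Since normalization commutes with étale base change, the induced map $\widetilde C^{\,\nu} \to C_Y^{\,\nu} = \mathbb{P}^1$ is an étale double cover; as $\mathbb{P}^1$ is simply connected, any such cover is trivial, so $\widetilde C^{\,\nu} \cong \mathbb{P}^1 \sqcup \mathbb{P}^1$. If $\widetilde C$ were itself irreducible, it would share the single function field of its normalization and hence $\widetilde C^{\,\nu}$ would be irreducible as well, contradicting its disconnectedness. Therefore $\widetilde C = C_1 \cup C_2$ with each $C_i \to C_Y$ birational, and $\tau$ necessarily swaps $C_1$ and $C_2$ (if $\tau$ fixed one of them, that component would already map $2{:}1$ to $C_Y$, forcing $\widetilde C$ to be irreducible). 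Note that this step uses nothing about $Y$ beyond the existence of the Enriques involution.

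I would then invoke the Picard very general assumption: $\ns(X) \cong U(2) \oplus E_8(-2)$ has rank $10$, matching $\numer(Y)$. The pullback $\pi^* \colon \numer(Y) \hookrightarrow \ns(X)$ is injective with image inside the $(+1)$-eigenlattice of $\tau^*$; since both lattices have rank $10$, the $(+1)$-eigenlattice has full rank, so $\tau^* - \mathrm{id}$ vanishes on $\ns(X) \otimes \mathbb{Q}$ and therefore on $\ns(X)$ itself. Consequently $[C_1] = \tau^*[C_1] = [C_2]$ in $\ns(X)$, giving
\[
\pi^*[C_Y] \;=\; [C_1] + [C_2] \;=\; 2[C_1].
\]
A discriminant comparison now shows that $\pi^*$ is an isomorphism onto $\ns(X)$: the image $\pi^*\numer(Y)$ is isomorphic to $U(2) \oplus E_8(-2)$ because $\pi^*$ doubles the intersection form, and both lattices have discriminant $2^{10}$, so the finite-index inclusion must be an equality. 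Writing $[C_1] = \pi^* D$ with $D \in \numer(Y)$ and using the injectivity of $\pi^*$ yields $[C_Y] = 2D$, as required.

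The main obstacle I anticipate lies in making Step 2 rigorous at the singular points of $C_Y$: one must verify that $\widetilde C \to C_Y$ is truly étale as a morphism of schemes (not merely topologically away from the singular locus of $C_Y$) and that normalization commutes with this base change. These are standard facts in étale descent but deserve to be spelled out. A secondary subtlety is confirming that $\pi^*\numer(Y)$ fills $\ns(X)$ entirely rather than sitting as a proper finite-index sublattice; the discriminant comparison handles this, and it is precisely here that the Picard very general hypothesis enters in a decisive way.
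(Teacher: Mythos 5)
Your proof is correct and is essentially the intended argument: the paper itself does not reprove Theorem \ref{galknu} (it defers to \cite[Theorem 1.1]{GK}), but your first two steps---the splitting of the preimage of $C_Y$ via the simply connected normalization $\mathbb{P}^1$, and the triviality of the $\tau^*$-action on $\ns(X)$ when the invariant sublattice has full rank---are precisely the content and proof of the paper's Lemma \ref{lemc'} (note only that the paper writes $f\colon X\to Y$ for the Enriques quotient, reserving $\pi$ for the covering of the quadric). Your third step, the discriminant comparison showing the finite-index inclusion $\pi^*\numer(Y)\subseteq\ns(X)\cong U(2)\oplus E_8(-2)$ is an equality (both lattices have discriminant $2^{10}$, and discriminant scales by the square of the index), is the standard and correct way to convert $\pi^*[C_Y]=2[C_1]$ into $2$-divisibility in $\numer(Y)$, and it is exactly where the Picard very general hypothesis of Definition \ref{verygenenr} enters.
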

As a consequence of this Theorem, if $C$ is a rational curve on the very general Enriques surface, then $\phi(C)\neq 1$.

\begin{remark}
If $C_Y$ is a rational curve on a very general Enriques surface $Y$, then its arithmetic genus $o_a(C_Y)$ is such that \begin{center} $p_a(C_Y)\equiv_4 1$. \end{center} 
In fact, by Theorem \ref{galknu}, $C_Y$ is 2-divisible in $\numer(Y)$, i.e., there exists an effective divisor $D_Y$ on $Y$ such that $C_Y\sim 2D_Y$. We have \begin{center}
    $p_a(C_Y)=\frac{1}{2}C_Y^2+1=\frac{1}{2}(4D_Y^2)+1=2D_Y^2+1$.
\end{center}
Since for an Enriques surface $D_Y^2$ is even for every effective divisor $D_Y$, we have the assertion.

\end{remark}

The following is given in \cite[Proposition 2.6.1]{CD}. Recall that an Enriques surface $Y$ is said to be nonnodal if it does not contain $(-2)$-curves.
\begin{lemma}(Cossec, Dolgachev)\label{isotr}
    Assume $L$ is a big and nef divisor on a nonnodal Enriques surface $Y$ such that $L^2=2(p-1)>1$, where $p$ denotes the arithmetic genus of $L$. If $\phi(L)=1$, then $L$ is such that:
    \begin{center}
        $L\sim(p-1)E_1+E_2$,
    \end{center}
    where $E_1$ and $E_2$ are half-fibers on $Y$ with $E_1\cdot E_2=1$.
\end{lemma}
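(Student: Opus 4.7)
The hypothesis $\phi(L)=1$ provides an effective divisor $E$ with $E^2=0$ and $E\cdot L=1$; the first step is to replace $E$ by a half-fiber. Writing $E=\sum n_iC_i$ with $C_i$ irreducible and using that $L$ is nef, the identity $\sum n_i(C_i\cdot L)=1$ forces a unique component $C_{i_0}$ with $n_{i_0}=1$ and $C_{i_0}\cdot L=1$. Adjunction makes $C_{i_0}^2$ an even integer, while the Hodge index theorem applied to the span of $L$ and $C_{i_0}$ gives $L^2\,C_{i_0}^2\le(L\cdot C_{i_0})^2=1$; hence $C_{i_0}^2\le 0$, and nonnodality forces $C_{i_0}^2=0$. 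Since $C_{i_0}\cdot L=1$ is odd, the class of $C_{i_0}$ is primitive in $\numer(Y)$, so $C_{i_0}$ is a half-fiber, which we call $E_1$.

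Now set $M:=L-(p-1)E_1$. From $E_1^2=0$, $E_1\cdot L=1$ and $L^2=2(p-1)$ we read
\[
M^2=0,\qquad M\cdot E_1=1,\qquad M\cdot L=p-1>0.
\]
Since $K_Y$ is numerically trivial, Riemann--Roch yields $\chi(M)=1+\tfrac12 M^2=1$, so $h^0(M)+h^0(K_Y-M)\ge 1$. The nefness of $L$ together with $(K_Y-M)\cdot L=-(p-1)<0$ rules out the effectivity of $K_Y-M$; hence $M$ must be effective.

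Decomposing $M=\sum m_jD_j$ into irreducible components, nonnodality gives $D_j^2\ge 0$, while $D_j\cdot D_k\ge 0$ for $j\ne k$; the identity $M^2=0$ then forces $D_j^2=0$ and $D_j\cdot D_k=0$ for all $j,k$. The Gram matrix of the span of the $D_j$ in $\numer(Y)$ vanishes identically, so the Hodge index theorem forces all $D_j$ to be numerically proportional to a single primitive isotropic class, namely a half-fiber $E_2$. Comparing with $M\cdot E_1=1$ forces $M$ to reduce to a single copy of $E_2$, with $E_1\cdot E_2=1$. Thus $L\equiv(p-1)E_1+E_2$; should the difference be $K_Y$, one replaces $E_2$ by the other half-fiber $E_2+K_Y$ of the same pencil to upgrade the equivalence to a linear one.

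The main obstacle is establishing that $M$ is effective; once that is in hand, the nonnodality hypothesis combined with adjunction parity and the Hodge index theorem rigidly forces the claimed decomposition. Without nonnodality the argument breaks down, since a $(-2)$-curve could appear as a component of either $E$ or $M$ without influencing their self-intersections.
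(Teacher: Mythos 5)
Your proof is correct. Note that the paper itself contains no argument for this lemma: it is quoted from Cossec--Dolgachev (\cite[Proposition 2.6.1]{CD}), with a pointer to \cite[Section 4]{CDGK3}, where it falls out of a general machinery decomposing arbitrary line bundles on an Enriques surface into sums of half-fibers. What you have written is a complete, self-contained proof in the classical style, and every step checks out: the extraction of the half-fiber $E_1$ computing $\phi(L)$ (nefness and $E\cdot L=1$ isolate a reduced component $C_{i_0}$, Hodge index together with adjunction parity gives $C_{i_0}^2\le 0$ since $L^2\ge 2$, nonnodality excludes $C_{i_0}^2=-2$, and primitivity of the class --- forced by the odd intersection number --- rules out the full-fiber case, since a full fiber is numerically $2$-divisible); the effectivity of $M=L-(p-1)E_1$ via $\chi(M)=1$ and the vanishing of $h^0(K_Y-M)$ from $(K_Y-M)\cdot L=-(p-1)<0$; and the identification of $M$ with a single half-fiber via the signature argument on the totally isotropic span of its components. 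Compared with the route through \cite{CDGK3}, your argument buys elementarity --- it uses only Riemann--Roch, Serre duality, the index theorem, and the standard structure of genus~$1$ pencils on Enriques surfaces --- at the cost of not generalizing to higher $\phi$, which the half-fiber decomposition machinery of \cite{CDGK3} handles uniformly. Two small remarks. First, your closing sentence is redundant: you establish effectivity of the divisor class $L-(p-1)E_1$ itself, not merely of its numerical class, so the effective representative \emph{is} the curve $E_2$ and $L\sim(p-1)E_1+E_2$ holds on the nose; the $K_Y$-correction never arises. Second, in the final comparison $M\cdot E_1=1$ you implicitly use that $e\cdot E_1\ge 1$ for the common primitive isotropic class $e$ (if $e\cdot E_1=0$ one would get $M\cdot E_1=0$); this deserves a one-line mention, after which $\sum_j m_jc_j=1$ indeed forces a single component with coefficient one.
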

The previous result is also proved in \cite[Section 4]{CDGK3}, where the authors characterize any line bundle on an Enriques surface $Y$ in terms of their decompositions in $\pic(Y)$ in sum of particular divisors (that have to be half-fibers if $Y$ is nonnodal).
\subsection{\rm R\sc epresentation as double covering of quadrics}\label{sub 2.2}

We review and comment the model of the K3 surfaces covering an Enriques surface as a double cover of $\mathbb{P}^1\times\mathbb{P}^1$ presented by Barth, Peters, Hulek and Van de Ven in \cite{BHPV}.\par
Let $Q\cong\mathbb{P}^1\times\mathbb{P}^1$ be a smooth quadric in $\mathbb{P}^3$ and let $((x_o:x_1),(y_0:y_1))$ be bihomogeneous coordinates on it. We define the involution $\sigma$ on $Q$ by \begin{center}
   $\sigma((x_0:x_1),(y_0:y_1))=((x_0:-x_1),(y_0:-y_1))$.
\end{center}

\begin{theorem}\label{modelbarth}(Horikawa's representation of nonnodal Enriques surfaces)
    Let $Y$ be a nonnodal Enriques surface, $f:X\rightarrow Y$ be the universal covering, $\tau$ be the Enriques involution and $E_1,E_2\subset Y$ be two half-fibers with $E_1\cdot E_2=1$.  
If $\tau$ and $\sigma$ are defined as above, then there is a $\sigma$-invariant bihomogeneous polynomial of bidegree $(4,4)$ in $(x_o,x_1),(y_0,y_1)$ with zero-set $B$ on the smooth quadric $Q$, such that the universal covering $X$ of $Y$ is the minimal resolution of the double covering of $Q$ ramified over $B$. The curve $B$ is reduced with at worst simple singularities and does not contain any fixed point of $\tau$. The involution $\tau$ on $X$ is induced by the involution $\sigma$ on $Q$. The two rulings of $Q$ define the two genus 1 pencils $f^*E_1$ and $f^*E_2$ on $X$.
\end{theorem}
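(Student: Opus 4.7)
The plan is to construct the double cover explicitly from the divisor $L := f^*E_1 + f^*E_2$ on $X$ and then descend the Enriques involution to the target. First I would compute $L^2 = 2(f^*E_1)\cdot(f^*E_2) = 4\,E_1\cdot E_2 = 4$ and invoke Riemann-Roch on the K3 surface $X$, namely $\chi(L) = L^2/2 + 2 = 4$, together with Kodaira vanishing for the big and nef divisor $L$, to obtain $h^0(X,L) = 4$. The two pencils $|f^*E_i|$ for $i=1,2$ define morphisms $p_i\colon X\to\PP^1$, and their product $(p_1,p_2)\colon X\to\PP^1\times\PP^1 = Q\hookrightarrow\PP^3$ pulls $\mathcal{O}_Q(1,1)$ back to $L$, hence realizes the map defined by $|L|$. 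Since $L^2/\deg_{\PP^3}Q = 4/2 = 2$, the morphism $X\to Q$ is generically $2$-to-$1$, i.e.\ a double cover.

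Next I would identify the branch curve $B \subset Q$. Because $\omega_X \cong \mathcal{O}_X$ and $\omega_Q \cong \mathcal{O}_Q(-2,-2)$, the standard Hurwitz formula for a double cover forces $B \sim -2K_Q$, so $B$ is cut out by a bihomogeneous polynomial of bidegree $(4,4)$. For the descent of $\tau$: the involution preserves each pencil $|f^*E_i|$ (since $f \circ \tau = f$), hence descends to an involution on $Q$ of the form $\sigma_1 \times \sigma_2$, where each $\sigma_i$ acts on the corresponding $\PP^1$. Each $\sigma_i$ must have exactly two fixed points, namely the two fibers of $|f^*E_i|$ that are preimages of the half-fibers on $Y$. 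Choosing coordinates on each factor so that these fixed points are $(1:0)$ and $(0:1)$ yields precisely $\sigma((x_0:x_1),(y_0:y_1)) = ((x_0:-x_1),(y_0:-y_1))$, and $B$ is automatically $\sigma$-invariant since it is the branch locus of a cover whose covering involution is $\tau$.

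The most delicate point is to verify that $B$ is reduced with at worst simple singularities and avoids the four fixed points of $\sigma$. The singularities of the double cover of $Q$ branched along $B$ are governed by those of $B$, and in order for the minimal resolution to be a smooth K3 surface every singularity of $B$ must be of ADE type, which forces $B$ to be reduced with simple singularities; here the nonnodal hypothesis on $Y$ is what rules out configurations producing extra $(-2)$-curves on $X$ that would obstruct this. Finally, if $B$ passed through a $\sigma$-fixed point $p \in Q$, then the scheme-theoretic preimage of $p$ in $X$ would be a single point at which $\tau$ necessarily has a fixed point, contradicting the fact that the Enriques involution is fixed-point-free. This last verification, together with the precise translation of \emph{nonnodal} into \emph{at worst ADE singularities of $B$}, is where I expect the main effort of the proof to concentrate.
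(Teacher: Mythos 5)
The paper itself offers no argument for this statement: its ``proof'' is the single citation to \cite[Theorem 18.1]{BHPV}, so your proposal is in effect a blind reconstruction of Horikawa's classical argument. The skeleton is the right one and matches the source the paper leans on: the map to $Q$ defined by the two elliptic pencils, degree $2$ from $L^2=4$, the branch class $B\sim -2K_Q$ from the canonical bundle formula for double covers, descent of $\tau$ to a product involution on $Q$, and fixed-point avoidance from freeness of $\tau$.

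There are, however, concrete gaps at exactly the places you flag as delicate. First, $(p_1,p_2)\colon X\to Q$ is only \emph{generically} finite of degree $2$: a $(-2)$-curve $R\subset X$ with $(f^*E_1+f^*E_2)\cdot R=0$ is contracted, and such curves are not excluded by nonnodality of $Y$ (the K3 cover of a nonnodal Enriques surface can contain $(-2)$-curves, as the surfaces $X_m$ of Theorem \ref{hs} show). One must therefore pass to the Stein factorization $X\to X'\to Q$; then $X'$ is normal, which is what forces $B$ to be \emph{reduced}, and $X\to X'$ is crepant because $K_X\cong\mathcal{O}_X$ (and $K_{X'}$ is trivial by the double-cover formula), so the singularities of $X'$ are canonical, i.e.\ of ADE type --- which is precisely ``$B$ reduced with simple singularities''. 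Your formulation runs this implication backwards, and your appeal to nonnodality here is misplaced: reducedness and ADE-ness follow from crepancy and freeness alone, while what nonnodality actually buys is the \emph{stronger} conclusion that $B$ is smooth (the paper's Lemma \ref{bsmooth}), which is not part of this statement. Second, your fixed-point-avoidance argument only treats a point $p\in B\cap\fix(\sigma)$ that is a smooth point of $B$, where the preimage in $X$ is a single point; if $p$ is a singular point of $B$, the fiber of $X\to Q$ over $p$ is a tree of $(-2)$-curves, and you must check that $\tau$ descends to $X'$ (it permutes the contracted curves, since it preserves $L$) and that an involution of a tree of rational curves always has a fixed point (it fixes a vertex or an edge of the dual tree; a fixed vertex gives an invariant $\mathbb{P}^1$, on which any involution has fixed points, and a fixed edge gives a fixed intersection point). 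Third, you assert without justification that each $\sigma_i$ is nontrivial with exactly two fixed points: one must rule out $\sigma_i=\mathrm{id}$, e.g.\ by noting that the base of $|f^*E_i|$ is the double cover of the base of $|2E_i|$ branched at the two points under the half-fibers, with $\tau$ inducing the deck transformation --- if $\sigma_i$ were trivial, the genus~$1$ fibration on $Y$ would have no multiple fibers, impossible on an Enriques surface. Finally, $\sigma$ stabilizing $B$ as a set only gives $\sigma^*F=\pm F$ for the bidegree-$(4,4)$ equation $F$ (and note the covering involution of $X'/Q$ is the deck involution, not $\tau$, so invariance is not ``automatic'' as you claim); the anti-invariant case $\sigma^*F=-F$ must be excluded by observing it forces $F$ to vanish at all four fixed points of $\sigma$, contradicting the avoidance just established. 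All of these are repairable along classical lines, but as written the argument is incomplete precisely at its stated crux.
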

\begin{proof}
    See \cite[Theorem 18.1]{BHPV}.
\end{proof}
We denote by $\pi:X\rightarrow Q$ the described double covering. Theorem \ref{modelbarth} has a converse: as shown in \cite[V, Section 23]{BHPV}, given a $\sigma$-invariant curve as in the Theorem, the K3 surface $X$ and an Enriques surface $Y=X/\tau$ can be constructed from it.
We want now to apply this model to the case of Enriques surfaces of base change type.  Sometimes, we shall write $B_{E_1,E_2}$ to indicate the branch locus of the double cover $\pi:X\rightarrow Q$ associated to the two half-fibers $E_1$ and $E_2$. 

\begin{remark}
    If $Y_m$ is an $m$-special Enriques surface and $E_1$ is a half-fiber of an $m$-special genus 1 pencil of $Y_m$, then there exists a half-fiber $E_2\subset Y_m$ such that $E_1\cdot E_2=1$. This holds for every half-fiber on an Enriques surface, see for example \cite[Section 3]{CDGK3}.
\end{remark}

 All the nine-dimensional families $\mathcal{F}_m$ are irreducible and in particular the general K3 surface $X_m\in\mathcal{F}_m$ does not belong to $\mathcal{F}_0$, so that the Enriques surface $Y_m$ is nonnodal.

\begin{lemma}\label{bsmooth}
    If $Y$ is nonnodal, then $B_{E_1,E_2}$ is smooth for every choice of $E_1$ and $E_2$. In particular, the assertion holds for a general $m$-special Enriques surface $Y_m$. 
\end{lemma}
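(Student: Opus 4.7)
The plan is to argue by contradiction: assume $B=B_{E_1,E_2}$ has a singular point and produce a $(-2)$-curve on $Y$, contradicting nonnodality. By Theorem~\ref{modelbarth}, $B$ is reduced with at worst simple (ADE) singularities and contains no fixed point of $\sigma$. So if $p\in B$ is singular, then $\sigma(p)\ne p$, and by $\sigma$-invariance of $B$, the point $\sigma(p)$ is a singular point of the same type as $p$.

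The key step is to track what these singularities do under the double cover $\pi:X\to Q$ and its minimal resolution. A simple singularity of $B$ at $p$ forces an ADE singularity on the (unresolved) double cover at $\pi^{-1}(p)$, whose minimal resolution contributes a nonempty configuration $\Gamma_p$ of smooth rational $(-2)$-curves on $X$; the same happens at $\sigma(p)$, yielding $\Gamma_{\sigma(p)}=\tau(\Gamma_p)$. Since $p$ and $\sigma(p)$ have disjoint analytic neighborhoods, $\Gamma_p$ and $\Gamma_{\sigma(p)}$ are disjoint on $X$. Moreover, no irreducible component $C\subset\Gamma_p$ can be $\tau$-invariant: an invariant smooth rational curve would force $\tau|_C$ to be either the identity (impossible since $\tau$ is fixed-point free) or a nontrivial involution of $\mathbb{P}^1$ (which always has fixed points, again impossible). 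Thus $\tau$ pairs up the components of $\Gamma_p\sqcup\Gamma_{\sigma(p)}$ without fixed curves.

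For such a pair $(C,\tau(C))$ with $C\cap\tau(C)=\emptyset$, the image $\bar C:=f(C)\subset Y$ is a smooth rational curve with $f^*\bar C=C+\tau(C)$, so $2\bar C^{\,2}=(C+\tau(C))^2=-4$, giving $\bar C^{\,2}=-2$. Hence $\bar C$ is a $(-2)$-curve on $Y$, contradicting the assumption that $Y$ is nonnodal. Therefore $B$ must be smooth. The final sentence of the lemma is then immediate from the fact, recalled in the paragraph preceding the statement, that the general $X_m\in\mathcal F_m$ does not belong to $\mathcal F_0$, so its Enriques quotient $Y_m$ is nonnodal. The only delicate point, and the one I would write out carefully, is the verification that each exceptional component really is paired off (not fixed setwise) by $\tau$; everything else is a direct consequence of the étale double cover formula applied to the minimal resolution.
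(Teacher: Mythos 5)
Your proof is correct, but it takes a genuinely different route from the paper's. The paper also argues by contradiction, but through the genus 1 pencils: given a singular point $q$ of $B$, it takes the two ruling lines $L_1,L_2$ through $q$ and observes that the strict transforms $\tilde L_1,\tilde L_2$ together with the exceptional chain $E_q$ form reducible members of the two elliptic pencils $f^*(\epsilon_1)$ and $f^*(\epsilon_2)$ on $X$; since these two fibers share the component $E_q$ and $\tau$ is fixed-point free, they cannot be $\tau$-invariant, so (say) $E_q+\tilde L_1$ is mapped by $\tau$ to a different member of its pencil and hence descends isomorphically to a member of $\epsilon_1$ on $Y$ containing at least two smooth rational curves, contradicting nonnodality. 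You instead bypass the pencils entirely: you use the hypothesis from Theorem \ref{modelbarth} that $B$ avoids the fixed locus of $\sigma$ to conclude $\sigma(p)\neq p$, so the two exceptional configurations $\Gamma_p$ and $\Gamma_{\sigma(p)}$ are disjoint and swapped by $\tau$, and the \'etale double cover formula pushes each exceptional component down to a $(-2)$-curve on $Y$ directly. Your version is shorter and makes explicit exactly where the condition $B\cap\fix(\sigma)=\emptyset$ is used, whereas the paper's version never invokes $\sigma(q)\neq q$ and instead rests on the (only implicitly justified) claim that two fibers sharing $E_q$ cannot both be $\tau$-invariant; in exchange, the paper's route exhibits the contradiction inside a genus 1 fibration, consistent with the surrounding analysis of the rulings. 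One small remark: the point you flag as delicate --- that no component of $\Gamma_p$ is $\tau$-invariant --- is in fact automatic from the disjointness $\Gamma_p\cap\Gamma_{\sigma(p)}=\emptyset$ you had already established (every component of $\Gamma_p$ is sent into $\Gamma_{\sigma(p)}$), so the fixed-point argument on $\mathbb{P}^1$ is redundant, though harmless.
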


\begin{proof}
Suppose $B$ has a singular point $q$, that by Theorem \ref{modelbarth} turns out to be a simple singularity. Let $L_1\in|\mathcal{O}_Q(1,0)|$ and $L_2\in|\mathcal{O}_Q(0,1)|$ be the two members of the rulings intersecting $B$ at $q$. Let us denote by $\pi':X'\rightarrow Q$ the double covering of $Q$ ramified over $B$. Since the branch locus $B$ is singular at $q$, also $X'$ has a singularity at $q$. The K3 surface $X$, that is the universal covering of $Y$, is the minimal resolution of $X'$: let us denote by $E_q\subset X$ the exceptional locus over $q$ and by $\tilde{L}_1\subset X$ and $\tilde{L}_2\subset X$ the strict transforms of $(\pi')^{-1}(L_1)\subset X'$ and $(\pi')^{-1}(L_2)\subset X'$ respectively. Since $q$ is a simple singularity, $E_q$ consists of a chain of $(-2)$-curves. As usual $f$ denote the Enriques quotient: the genus 1 pencil $f^*(\epsilon_1)$ on $X$, that is the pullback of the ruling $|\mathcal{O}(1,0)|$, has a reducible member consisting of the union of $E_q$ and $\tilde{L}_1$, while the genus 1 pencil $f^*(\epsilon_2)$ on $X$ has $E_q+\tilde{L}_2$ as singular fiber. This proves that both the pencils $f^*(\epsilon_1)$ and $f^*(\epsilon_2)$ have a member consisting of a configuration of at least two $(-2)$-curves. Since they share the exceptional locus $E_q$, these singular members cannot be invariant with respect to $\tau$. So $\tau$ sends, say, $E_q+\tilde{L}_1$ to another member of $f^*(\epsilon_1)$ and hence the genus 1 pencil $\epsilon_1$ on $Y$ has a member consisting of at least two smooth rational curves. Since $Y$ is nonnodal, this is a contradiction.  
\end{proof}

\section{Rational curves on the very general Enriques surface\label{3.2}}

We introduce the so-called \textit{logarithmic Severi varieties}, parametrizing curves with given tangency conditions to a fixed curve. The definition and the main results are given by Dedieu in \cite{De} and they are based on the works of Caporaso and Harris (see for example \cite{CH}). 

\subsection{\rm L\sc ogarithmic \rm S\sc everi varieties of curves}

Let us denote by $\underline{N}$ the set of sequences $\alpha=[\alpha_1,\alpha_2,\dots]$ of nonnegative integers with all but finitely many $\alpha_i$ non-zero. In practice we shall omit the infinitely many zeroes at the end. For $\alpha\in\underline{N}$, we let \begin{center}
    $|\alpha|=\alpha_1+\alpha_2+\dots$\\
    $\mathcal{I}\alpha=\alpha_1+2\alpha_2+\dots+n\alpha_n+\dots$
\end{center}

\begin{definition}\label{logsev}
Let $S$ be a smooth projective surface, $T\subset S$ a smooth, irreducible curve
and $L$ a line bundle or a divisor class on $S$ with arithmetic genus $p$. Let $\gamma$ be an integer satisfying $0\leq\gamma\leq p$, let $\alpha\in\underline{N}$ such that \begin{center}
    $\mathcal{I}\alpha=L\cdot T$.
\end{center} We denote by $V_{\gamma,\alpha}(S,T,L)$ the locus of curves in $|L|$ such that \begin{itemize}
    \item $C$ is irreducible of geometric genus $\gamma$ and algebraically equivalent to $L$,
    \item denoting by $\mu:\tilde{C}\rightarrow S$ the normalization of $C$ composed with the inclusion $C\subset S$, there exists $|\alpha|$ points $Q_{i,j}\in C$, $1\leq j\leq\alpha_i$ such that \begin{center}
        $\mu^*T=\sum\limits_{1\leq j\leq\alpha_i}iQ_{i,j}$.
    \end{center}
\end{itemize}
\end{definition}

\begin{theorem}[Dedieu]\label{dedieu}
    Let $V$ be an irreducible component of $V_{\gamma,\alpha}(S,T,L)$, $[C]$ a general member of $V$ and $\mu:\tilde{C}\rightarrow S$ its normalization as in the Definition \ref{logsev}. Let now $Q_{i,j}$, $1\leq j\leq\alpha_i$ be points in $\tilde{C}$ such that \begin{center}$\mathcal{I}\alpha=L\cdot T$ \end{center} and set \begin{center}$D=\sum\limits_{1\leq j\leq\alpha_i} (i-1)Q_{i,j}$.\end{center}
    \begin{itemize}
        \item[(i)] If $-K_S\cdot C_i-$deg $\mu_*D_{|C_i|}\geq 1$ for every irreducible component $C_i$ of $C$, then \begin{center}
            $\dim(V)=-(K_S+T)\cdot L+\gamma-1+|\alpha|$
        \end{center}
        \item[(ii)] If $-K_S\cdot C_i-$deg $\mu_*D_{|C_i|}\geq 2$ for every irreducible component $C_i$ of $C$, then \begin{itemize}
            \item[(a)] the normalization map $\mu$ is an immersion, except possibly at the points $Q_{i,j}$;
            \item[(b)] the points $Q_{i,j}$ of $\tilde{C}$ are pairwise distinct;
        \end{itemize}

    \end{itemize}
\end{theorem}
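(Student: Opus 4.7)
\medskip
The plan is to treat this as a deformation-theoretic statement for the normalization map $\mu:\tilde{C}\to S$ with prescribed ramification along $T$. Since $\mu^{*}T=\sum iQ_{i,j}$, first-order deformations of $\mu$ preserving this tangency profile are controlled, when $\mu$ is an immersion, by sections of the twisted normal sheaf $N_{\mu}(-D)$, where $D=\sum(i-1)Q_{i,j}$ records the excess tangency to $T$. Obstructions to smoothing such deformations lie in $H^{1}(\tilde{C},N_{\mu}(-D))$. When $\mu$ is not yet known to be an immersion, one has to replace $N_{\mu}$ by the appropriate equisingular normal sheaf built from the differential $d\mu$, which agrees with $N_{\mu}$ away from a finite set of points.

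\medskip
First I would compute the expected dimension via Riemann--Roch on $\tilde{C}$. From the exact sequence $0\to T_{\tilde{C}}\to \mu^{*}T_{S}\to N_{\mu}\to 0$ one reads off $\deg N_{\mu}=-K_{S}\cdot L+2\gamma-2$, and since $\deg D=\mathcal{I}\alpha-|\alpha|=L\cdot T-|\alpha|$, Riemann--Roch on $\tilde{C}$ gives
\[
\chi(N_{\mu}(-D))=-(K_{S}+T)\cdot L+\gamma-1+|\alpha|,
\]
which matches the claimed dimension.

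\medskip
To obtain (i), I would prove $H^{1}(\tilde{C},N_{\mu}(-D))=0$ under the positivity hypothesis. Via Serre duality this reduces to a vanishing statement on each irreducible component $C_{i}$, and the assumption $-K_{S}\cdot C_{i}-\deg\mu_{*}D_{|C_{i}|}\ge 1$ is exactly what guarantees that the restriction of $N_{\mu}(-D)$ to (the normalization of) $C_{i}$ has large enough degree to kill the relevant cohomology. Once obstructions vanish, $V$ is smooth at $[C]$ with tangent space of the expected dimension, proving (i).

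\medskip
For (ii), under the strengthened hypothesis $\ge 2$, a local analysis shows that the general member is as geometric as possible: if $\mu$ failed to be an immersion away from the $Q_{i,j}$, or if two of the $Q_{i,j}$ collided, then imposing the extra incidence condition amounts to twisting $N_{\mu}(-D)$ down by one more point, and the extra unit of positivity still forces $H^{1}=0$ on that twist, so the bad locus is a proper sublocus of $V$ and generality of $[C]$ yields both (a) and (b). The main obstacle will be making the deformation theory functorial when $\mu$ is not a priori an immersion, and establishing the $H^{1}$-vanishing relative to the divisor $T$; this is precisely the technical substance of Dedieu's work \cite{De}, which adapts the Caporaso--Harris framework \cite{CH} to the logarithmic setting.
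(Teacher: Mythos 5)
The paper offers no proof of this statement at all: it is imported verbatim from Dedieu \cite{De} (building on Caporaso--Harris \cite{CH}), so the citation is the paper's entire argument, and your sketch reconstructs exactly the approach of that cited source. Your numerics are correct ($\deg N_\mu=-K_S\cdot L+2\gamma-2$ from the normal sheaf sequence, $\deg D=\mathcal{I}\alpha-|\alpha|=L\cdot T-|\alpha|$, hence $\chi(N_\mu(-D))=-(K_S+T)\cdot L+\gamma-1+|\alpha|$), hypothesis (i) indeed gives $\deg N_\mu(-D)\geq 2\gamma-1$ on the normalization of each component so that $H^1$ vanishes by Serre duality, and spending the extra unit of positivity in (ii) to show that ramification away from the $Q_{i,j}$, or collision of two $Q_{i,j}$, imposes one further twist and hence cuts out a proper sublocus of $V$ is precisely the Caporaso--Harris mechanism. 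The steps you explicitly defer --- the correct equisingular replacement for $N_\mu$ at non-immersive points, and the identification of the tangent space of $V$ with $H^0(N_\mu(-D))$, which upgrades the Euler-characteristic lower bound to the stated equality of dimensions --- constitute the technical content of \cite{De}; since the paper itself does not reprove them either, your proposal matches the paper's treatment of this theorem as closely as a blind attempt could.
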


\subsection{\rm R\sc ational curves}
Let $Y$ be a very general Enriques surface and $X$, $Q$ and $B$ be as in Subsection \ref{sub 2.2}. Let $f:X\rightarrow Y$ denote the Enriques quotient and $\pi:X\rightarrow Q$ the quotient over the smooth quadric. We investigate the problem of the existence of rational curves on $Y$. Let us suppose $Y$ has an irreducible rational curve $C_Y$.
\begin{lemma}\label{lemc'}
    If $C_Y\subset Y$ is an irreducible rational curve, then $f^{-1}(C_Y)$ consists of two linearly equivalent rational curves $C_X$ and $C'_X$ on $X$.
\end{lemma}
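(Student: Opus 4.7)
My plan is to analyze $f^{-1}(C_Y)$ as the scheme-theoretic fiber product $X\times_Y C_Y$, which is \'etale of degree $2$ over $C_Y$ by base change from $f$. The crucial step is to pull back once more along the normalization $\nu:\tilde C_Y\cong\mathbb{P}^1\to C_Y$: the fiber product $X\times_Y\tilde C_Y\to\mathbb{P}^1$ is an \'etale double cover of a simply connected scheme, hence isomorphic to the trivial cover $\mathbb{P}^1\sqcup\mathbb{P}^1$. This disjoint union is a normalization of $f^{-1}(C_Y)$, and having two connected components it forces $f^{-1}(C_Y)$ to have exactly two irreducible components $C_X$ and $C_X'$, each with normalization $\mathbb{P}^1$ and hence rational. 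Because $f$ is \'etale and $C_Y$ is reduced, $f^{-1}(C_Y)$ is reduced as well, so $f^*C_Y=C_X+C_X'$ as divisors. Moreover $\tau$ must swap the two components: if $\tau(C_X)=C_X$ were to hold, then $\tau$ would induce an involution on $\tilde C_X\cong\mathbb{P}^1$ whose fixed points would map via normalization to $\tau$-fixed points on $X$, contradicting the fixed-point-freeness of the Enriques involution.

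For linear equivalence I would appeal to the very-general hypothesis: the universal cover $X$ has Picard rank $10$, which matches the rank of the sublattice $f^*\ns(Y)\subset\ns(X)$. Therefore $f^*\ns(Y)\otimes\QQ=\ns(X)\otimes\QQ$, and because the identity $f\circ\tau=f$ forces $\tau^*$ to act trivially on $f^*\ns(Y)$, the involution $\tau^*$ acts as the identity on all of $\ns(X)\otimes\QQ$. Since $X$ is a K3 surface, $\pic(X)=\ns(X)$ is torsion free, so $\tau^*$ is already the identity on $\pic(X)$. Hence $C_X\sim\tau^*C_X=C_X'$.

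The main subtlety I anticipate is the irreducibility argument in the first step. One cannot conclude directly from the \'etaleness of $f^{-1}(C_Y)\to C_Y$ that $f^{-1}(C_Y)$ is disconnected, since a singular irreducible rational curve $C_Y$, for example a nodal one whose topological fundamental group is $\ZZ$, does admit nontrivial connected \'etale double covers. Pulling back to the smooth normalization circumvents this issue and makes the triviality of the double cover automatic.
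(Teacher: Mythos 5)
Your proof is correct and takes essentially the same route as the paper: the paper invokes the standard dichotomy for restricting the \'etale double cover $f$ to a curve (via the $2$-torsion bundle $\eta_{C_Y}$ and its pullback to the normalization $\overline{C}_Y\cong\mathbb{P}^1$, citing \cite{CDGK}), which is exactly your trivialization of the cover after base change to the simply connected normalization, and it likewise deduces $C_X\sim C'_X$ from $\tau^*$ acting as the identity on $\ns(X)$ when $Y$ is very general. The only difference is that you spell out details the paper cites or asserts --- the normalization/base-change mechanism behind the dichotomy, the fixed-point argument that $\tau$ swaps the two components, and the rank-$10$ lattice argument for $\tau^*=\mathrm{id}$ on $\pic(X)$ --- all of which are correct.
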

\begin{proof}
We follow the idea in \cite[Section 2]{CDGK}. We denote by $\nu_{C_Y}:\overline{C}_{y}\rightarrow C_Y$ the normalization of $C_Y$ and define $\eta_{C_Y}:=\mathcal{O}_{C_Y}(K_S)=\mathcal{O}_{C_Y}(-K_S)$, a nontrivial 2-torsion element in $\pic^0(C_Y)$, and $\eta_{\overline{C}_{Y}}:=\nu^*\eta_{C_Y}$. By standard results on covering of complex manifolds (see \cite[Section 2]{CDGK} or \cite[Section 1.17]{BHPV}), two cases may happen: \begin{itemize}
    \item $\eta_{\overline{C}_Y}\ncong\mathcal{O}_{\overline{C}_Y}$ and $f^{-1}(C_Y)$ is irreducible;
    \item $\eta_{\overline{C}_Y}\cong\mathcal{O}_{\overline{C}_Y}$ and $f^{-1}(C_Y)$ consists of two irreducible components conjugated by the Enriques involution $\tau$. 
\end{itemize} 
Since $C_Y$ is rational, its normalization $\overline{C}_Y$ is isomorphic to $\mathbb{P}^1$, whence any degree 0 line bundle on it is linearly equivalent to $\mathcal{O}_{\overline{C}_Y}$: it implies that $f^{-1}(C)$ consists of two rational curves $C_X$ and $C'_X$. Moreover, since $Y$ is very general, we have that $\tau^*$ acts as the identity on $\ns(X)$, so that $C_X\sim C'_X$.
\end{proof}
\begin{remark}\label{no-2}
As an immediate Corollary, we recover the very well-known fact that a very general K3 surface $X$ with an Enriques involution does not admit smooth $(-2)$-curves: otherwise, its image under the Enriques quotient would be a rational curve splitting in two $(-2)$ curves, which are in fact not linearly equivalent.
\end{remark}

The images $C_Q:=\pi(C_X)$ and $C'_Q:=\pi(C'_X)$ on $Q\cong\mathbb{P}^1\times\mathbb{P}^1$ under the Horikawa model $\pi:X\rightarrow Q$ have to be rational curves. In order to study the curves $C_Y$ on $Y$ and $C_X$ and $C'_X$ on $X$, we begin our analysis by starting from the curves $C_Q$ and $C'_Q$ on $Q$. The curves $C_Q$ and $C'_Q$ are linearly equivalent, so that both of them belong to $|\mathcal{O}_Q(m,n)|$, for some nonnegative integers $m$ and $n$.

\begin{lemma}\label{nosplit}
    Suppose that $m=1$ or $n=1$. Then, $\pi^{-1}(C_Q)=C_X$, with $C_X$ irreducible and rational on $X$. Equivalently, there exists an intersection point between  $C_Q$ and the branch locus $B$ of $\pi:X\rightarrow Q$ at which the intersection has odd order.
\end{lemma}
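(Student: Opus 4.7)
The plan is to dispose of the ``equivalently'' assertion first and then argue the main claim by contradiction. Suppose $n=1$ (the case $m=1$ being symmetric). Then $C_Q$ meets each ruling $F\in|\mathcal{O}_Q(1,0)|$ in a single point, so the first projection $Q\to\mathbb{P}^1$ restricts to an isomorphism $C_Q\cong\mathbb{P}^1$. The scheme $\pi^{-1}(C_Q)$ is a double cover of $C_Q$ associated with $\mathcal{O}_Q(2,2)|_{C_Q}$ and the section cutting out $B\cap C_Q$, the curve $B$ being smooth by Lemma \ref{bsmooth}. Since line bundles on $\mathbb{P}^1$ are classified by degree, this cover splits precisely when $B\cap C_Q=2E$ for some effective divisor $E$ on $C_Q$, that is, when every local intersection multiplicity of $C_Q$ with $B$ is even. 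This yields the equivalence stated in the lemma.

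For the main assertion one argues by contradiction. Suppose $\pi^{-1}(C_Q)$ splits, and decompose $\pi^*C_Q=C_X+\iota^*C_X$, where $\iota$ denotes the covering involution of $\pi$. The pullback $F':=\pi^*F$ of a generic $F\in|\mathcal{O}_Q(1,0)|$ is a smooth genus one curve (a double cover of $F\cong\mathbb{P}^1$ branched at the $F\cdot B=4$ distinct points of $F\cap B$), and the morphism $X\to\mathbb{P}^1$ obtained by composing $\pi$ with the first projection realizes one of the two genus $1$ fibrations of Theorem \ref{modelbarth}. Using $\iota^*F'=F'$ one computes
\[
2=2(C_Q\cdot F)=(\pi^*C_Q)\cdot F'=(C_X+\iota^*C_X)\cdot F'=2(C_X\cdot F'),
\]
so $C_X\cdot F'=1$ and $C_X$ is an irreducible $1$-section of this elliptic fibration.

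It remains to observe that any such $1$-section must be a smooth $(-2)$-curve, which will contradict Remark \ref{no-2}. The restriction of the fibration to $C_X$ is a finite morphism $C_X\to\mathbb{P}^1$ of degree $C_X\cdot F'=1$. Composing with the normalization $\nu:\overline{C}_X\to C_X$---where $\overline{C}_X\cong\mathbb{P}^1$ by Lemma \ref{lemc'}---yields a degree-one morphism $\mathbb{P}^1\to\mathbb{P}^1$, which is necessarily an isomorphism. This forces $\nu$ itself to be injective, so $C_X=\overline{C}_X\cong\mathbb{P}^1$ is smooth rational and hence a smooth $(-2)$-curve on the K3 surface $X$. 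Remark \ref{no-2} then provides the desired contradiction, so $\pi^{-1}(C_Q)=C_X$ is irreducible; its rationality is already ensured by Lemma \ref{lemc'}. The delicate step is precisely this passage from ``irreducible $1$-section'' to ``smooth $(-2)$-curve'', which is where the hypothesis $m=1$ or $n=1$ and the very-generality of $Y$ come together.
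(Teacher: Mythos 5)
Your proof is correct and follows essentially the same route as the paper: assuming $\pi^{-1}(C_Q)$ splits, both components are $1$-sections of the genus-$1$ fibration pulled back from the ruling, hence smooth $(-2)$-curves, contradicting the fact that the K3 cover of a very general Enriques surface carries none (Remark \ref{no-2}). The only differences are minor refinements rather than a different approach: you obtain $C_X\cdot F'=1$ by $\iota$-symmetry instead of the paper's case analysis excluding $C_X\cdot F'=0$, and you actually justify the ``equivalently'' clause via the splitting criterion for double covers over $C_Q\cong\mathbb{P}^1$ (no $2$-torsion in $\pic(\mathbb{P}^1)$), which the paper merely asserts.
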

\begin{proof}
    The first statement is equivalent to the second since a curve in $Q$ splits in the K3 cover $X$ if and only if it intersects the branch locus $B$ with even order at every intersection point. 
    Suppose that $\pi^{-1}(C_Q)$ consists of two curves $C_X$ and $\tilde{C}_X$. Now, if $m=1$ or $n=1$, we have that the curve $C_Q$ is a section for one of the rulings $\mathcal{O}_Q(0,1)$ or $\mathcal{O}_Q(1,0)$. Hence, we have, say, $C_Q\cdot\mathcal{O}_Q(1,0)=1$, from which $(C_X+\tilde{C}_X)\cdot\pi^*\mathcal{O}_Q(0,1)=2$. Since $C_X\cdot\pi^*\mathcal{O}_Q(0,1)=0$ would imply that $C_X$ is a member of the elliptic pencil $|\pi^*\mathcal{O}_Q(0,1)|$ and the same for $\tilde{C}_X$, we have $C_X\cdot\mathcal{O}_Q(0,1)=\tilde{C}_X\cdot\mathcal{O}_Q(0,1)=1$, so that the rational curves $C_X$ and $\tilde{C}_X$ are sections for the elliptic pencil $|\pi^*\mathcal{O}_Q(0,1)|$ and whence smooth $(-2)$-curves. Since $X$ covers a very general Enriques surface $Y$, this is a contradiction.
\end{proof}

\begin{proposition}\label{c''}
Suppose that $m=1$ or $n=1$ and denote by $p$ an intersection point  between $C_Q$ and $B$ at which the intersection has odd order $k$. Then, there exists another intersection point $p'$ between $C_Q$ and $B$ at which the intersection has odd order $k'$. Furthermore, $C_Q\in|\mathcal{O}_Q(m,n)|$ is a rational curve tangent to $B$ at $r:=2m+2n-\frac{k-1}{2}-\frac{k'-1}{2}-1$ (possibly coincident) points $q_1,\dots,q_r$ with even order.
\end{proposition}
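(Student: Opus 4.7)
The plan is to apply Riemann--Hurwitz to the degree-two morphism $\tilde\pi:\tilde C_X\to\tilde C_Q$ induced on normalizations by $\pi|_{C_X}$. By Lemma \ref{nosplit}, the hypothesis $m=1$ or $n=1$ guarantees $\pi^{-1}(C_Q)=C_X$, so $\pi|_{C_X}:C_X\to C_Q$ is a genuine double cover; since $C_X$ and $C_Q$ are both rational, $\tilde\pi$ is a double cover $\mathbb{P}^1\to\mathbb{P}^1$. Riemann--Hurwitz then gives $\deg R=2$ for the ramification divisor, and since each ramification point of a degree-two cover contributes $1$ to $R$, there are exactly two branch points $p,p'\in\tilde C_Q$.

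Next I would identify these branch points with the odd-order intersection points of $C_Q$ with $B$. Near any point of $Q$ lying on $B$, the cover $\pi$ is locally of the form $z^2=t_B$ for a local equation $t_B$ of the smooth curve $B$ (smoothness is granted by Lemma \ref{bsmooth}). Pulling back along the map $\tilde C_Q\to Q$ in a local parameter $s$ at $\tilde p\in\tilde C_Q$, the induced cover $\tilde\pi$ has local equation $z^2=s^{m_{\tilde p}}u(s)$ with $u$ a unit, where $m_{\tilde p}$ denotes the multiplicity of $\nu_Q^*B$ at $\tilde p$. This cover ramifies at $\tilde p$ precisely when $m_{\tilde p}$ is odd. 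Thus among the points in the support of $\nu_Q^*B$, exactly two — call them $p$ and $p'$ with odd multiplicities $k$ and $k'$ — are branch points; all others have even multiplicity, i.e., correspond to even-order (tangential) intersections of $C_Q$ with $B$.

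A degree count closes the argument: $\deg\nu_Q^*B=C_Q\cdot B=(m,n)\cdot(4,4)=4(m+n)$, so subtracting the two odd contributions we get $4(m+n)-k-k'=2r$ with $r=2(m+n)-\tfrac{k+k'}{2}=2m+2n-\tfrac{k-1}{2}-\tfrac{k'-1}{2}-1$, which matches the claimed formula. Writing the residual even divisor as $2(q_1+\cdots+q_r)$ with possibly coincident $q_i$ expresses $C_Q$ as tangent to $B$ at these $r$ points with even order.

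The main technical step is the local identification of the ramification of $\tilde\pi$ with odd vanishing of $\nu_Q^*B$ on the normalization (and in particular checking that the ramification is unaffected by possible singularities of $C_Q$, which is handled by working entirely on $\tilde C_Q$ using the smoothness of $B$); the rest is just Riemann--Hurwitz and an intersection-number accounting.
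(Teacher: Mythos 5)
Your proof is correct, but it takes a genuinely different route from the paper's. The paper works directly with the singular curve $C_X\subset X$: it gets the existence of $p'$ from the parity of $C_Q\cdot B=4m+4n$, computes $p_a(C_X)=2mn+1$ by adjunction from $C_X^2=2C_Q^2$, identifies (via the same local model $z^2=s^{k_s}u(s)$ that you use) the singularity of $C_X$ over a tangency point of order $k_s$ as an $A_{k_s-1}$ point with delta-invariant $\frac{k_s}{2}$ or $\frac{k_s-1}{2}$ according to parity, and then imposes $g(C_X)=0$; comparing the resulting relation with $\sum_j k_j=4m+4n-k-k'$, and using the identity $2mn+1=2m+2n-1$ (a second, separate use of the hypothesis $m=1$ or $n=1$), it concludes that no odd-order point besides $p$ and $p'$ occurs. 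Your Riemann--Hurwitz argument on the normalized double cover $\tilde\pi:\mathbb{P}^1\to\mathbb{P}^1$ packages the same genus bookkeeping more efficiently: it yields ``exactly two odd-order intersection points'' in one stroke, avoids both the adjunction computation on the K3 and the numerical coincidence $2mn+1=2m+2n-1$ (in your version the hypothesis $m=1$ or $n=1$ enters only through Lemma \ref{nosplit} and the smoothness of $C_Q$), and the formula for $r$ then drops out of the single intersection number $C_Q\cdot B=4m+4n$; working on $\tilde C_Q$ also makes the argument insensitive to possible singularities of $C_Q$, which would matter if one wanted a version for general $(m,n)$. Two small points deserve to be made explicit in your write-up, though both are easy: first, that all ramification of $\tilde\pi$ lies over $C_Q\cap B$, because $B$ is smooth by Lemma \ref{bsmooth} (so $X$ coincides with the double cover of $Q$, with no exceptional locus, and $\pi$ is \'etale off $B$); second, that $\tilde\pi$ is the normalization of the pulled-back cover $C_Q\times_Q X=\pi^*C_Q=C_X$, which is reduced and, by Lemma \ref{nosplit}, irreducible---this is exactly the identification you flag as the main technical step, and it goes through as you sketch it. Note also that your observation that a degree-two map cannot have two ramification points in one fiber gives $p\neq p'$, matching the paper's claim that $p'$ is \emph{another} intersection point.
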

\begin{proof}
In order to show the existence of $p'$, it is sufficient to notice that the intersection number $C_Q\cdot B=4m+4n$ is even. We have \begin{center}
    $C_X^{2}=2C_Q^{2}=2(\mathcal{O}_Q(m,n)^2)=2(2mn)=2(2mn+1-1)$,
\end{center}from which the arithmetic genus of $C_X$ is \begin{center}
       $p_a(C_X)=2mn+1$.
   \end{center}
   Since $m$ or $n$ is equal to 1, the curve $C_Q$ is smooth as well as the branch locus $B$, and whence the singularities of $C_X$ come from points at which $C_Q$ is tangent to $B$. Suppose $C_Q$ is tangent to $B$ at a point $s$ with order $k_s$ and consider local coordinates $(x,y)$ for $Q$ in such a way $B$ has local equation $y=0$ at $s$: the curve $C_Q$ has local equation $y-x^{k_s}$ at $s$. If the double covering is given by $(x,t)\mapsto (x,t^2)$ in the local coordinates $(x,y,t)$, we have that $C_Q$ has local equation $t^2-x^{k_s}$ in a neighborhood of $\pi^{-1}(s)$. This implies that $C_X$ has an $A_{k_s-1}$ singularity at $\pi^{-1}(s)$. Now, if $k_s$ is even, an $A_{k_s-1}$ singularity imposes $\frac{k_s}{2}$ conditions on the genus of $C_X$, while if $k_s$ is odd an $A_{k_s-1}$ singularity imposes $\frac{k_s-1}{2}$ conditions. Since $C_X$ is rational, its geometric genus $g(C_X)$ is \begin{center}
      $g(C_X)=2mn+1-\frac{k-1}{2}-\frac{k'-1}{2}-\sum\limits_{j=1}^{r'} \frac{k_j}{2}-\sum\limits_{j=r'+1}^r \frac{k_j-1}{2}=0$,
  \end{center}where $k_j$ indicates the intersection order between $C_Q$ and $B$ at the point $q_j$, the first sum runs over the points at which the intersection has even order and the second sum runs over the points at which the intersection has odd order. We have \begin{center}
      $\sum\limits_{j=1}^{r'} \frac{k_j}{2}+\sum\limits_{j=r'+1}^r \frac{k_j-1}{2}=2mn+1-\frac{k-1}{2}-\frac{k'-1}{2}$=\end{center}\begin{center}$=2m+2n-1-\frac{k}{2}-\frac{k'}{2}+1=2m+2n-\frac{k}{2}-\frac{k'}{2}$,
  \end{center}where the second equality holds since for $m=1$ or $n=1$, we have $2mn+1=2m+2n-1$. But $C_Q\cdot B=\mathcal{O}(m,n)\cdot\mathcal{O}(4,4)=4m+4n$, so that we have \begin{center}
      $\sum\limits_{j=1}^r k_j=4m+4n-k-k'$,
  \end{center}from which 
  \begin{center}
      $\sum\limits_{j=1}^r \frac{k_j}{2}=2m+2n-\frac{k}{2}-\frac{k'}{2}$.
  \end{center}Hence, $r'=r$ and the sum $\sum\limits_{j=r'+1}^r \frac{k_j-1}{2}$ is empty, which means that the intersection order between $C_Q$ and $B$ at $q_j$ is even for every $q_j$.
\end{proof}

\begin{remark}
    In the situation of our interest, at least one between $m$ and $n$ will be equal to 1: in this case, all the irreducible curves of $|\mathcal{O}_Q(m,n)|$ are smooth, so they are isomorphic to their normalization. In other words, the condition of Proposition \ref{c''} is equivalent to requiring that $C_Q$ belongs to the logarithmic Severi variety $V_{0,\alpha}(\mathbb{P}^1\times\mathbb{P}^1,B,\mathcal{O}_Q(m,n))$, where $\alpha=[2,2mn+1]$. 
\end{remark}

By reversing the process, to prove the existence of rational curves on the very general Enriques surface $Y$, it is sufficient to find a curve in the linear system $|\mathcal{O}_Q(m,n)|$ satisfying the conditions of Proposition \ref{c''}. Notice that, as pointed out in the proof of Proposition \ref{c''}, if $m=1$ or $n=1$, we have that $2mn+1=2m+2n-1$.
 \par
The aim of the last part of the paper is to show that the logarithmic Severi variety $V_{0,\alpha}(\mathbb{P}^1\times\mathbb{P}^1,B,\mathcal{O}_Q(m,n))$, with $\alpha=[2,2m+2n-1]$, is nonempty for values of $m$ and $n$ (with at least one of them equal to 1) such that $mn$ is arbitrary large, or, equivalently, that the very general Enriques surface $Y$ admits rational curves of arbitrary large arithmetic genus. From now on, we shall write just $V_\alpha$ to indicate $V_{0,\alpha}(\mathbb{P}^1\times\mathbb{P}^1,B,\mathcal{O}_Q(m,n))$ (or $V_{\alpha,B}$ when we want to focus on the curve $B$).
\begin{lemma}\label{valpha}
    If $V_{0,\alpha}$ is nonempty, then its dimension is 0.\end{lemma}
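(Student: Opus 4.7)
The plan is a direct application of Dedieu's formula, Theorem \ref{dedieu}(i), with $S=Q=\mathbb{P}^1\times\mathbb{P}^1$, $T=B$ (which is smooth by Lemma \ref{bsmooth}), $L=\mathcal{O}_Q(m,n)$, geometric genus $\gamma=0$, and tangency profile $\alpha=[2,2m+2n-1]$. The compatibility $\mathcal{I}\alpha = L\cdot B$ required by Definition \ref{logsev} holds by a trivial computation:
$$\mathcal{I}\alpha = 1\cdot 2 + 2\cdot(2m+2n-1) = 4m+4n = \mathcal{O}_Q(m,n)\cdot\mathcal{O}_Q(4,4),$$
so the variety $V_\alpha$ is well-defined. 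Since we are in the case $m=1$ or $n=1$, the arithmetic genus of $L$ equals $0$, so the requirement $0\leq\gamma\leq p_a(L)$ is also satisfied.

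Using $K_Q = \mathcal{O}_Q(-2,-2)$ and $B\in|\mathcal{O}_Q(4,4)|$ one has $K_Q + B \sim \mathcal{O}_Q(2,2)$, hence $(K_Q+B)\cdot L = 2m+2n$, while $|\alpha| = 2+(2m+2n-1) = 2m+2n+1$. Substituting into the formula in Theorem \ref{dedieu}(i) gives
$$\dim V_\alpha = -(K_Q+B)\cdot L + \gamma - 1 + |\alpha| = -(2m+2n) + 0 - 1 + (2m+2n+1) = 0.$$

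The only genuine verification is the numerical hypothesis of part (i): for every irreducible component $C_i$ of the general $C\in V_\alpha$ one needs $-K_Q\cdot C_i - \deg \mu_*D_{|C_i} \geq 1$. Here $C$ is itself irreducible by the definition of $V_{0,\alpha}$, and only the $\alpha_2=2m+2n-1$ points carrying the label $i=2$ contribute to $D = \sum (i-1)Q_{i,j}$, giving $\deg D = 2m+2n-1$. Hence
$$-K_Q\cdot C - \deg \mu_* D = (2m+2n) - (2m+2n-1) = 1,$$
which barely clears the bound. There is no real obstacle here; the statement simply reflects the rigid situation in which the expected dimension in Dedieu's formula saturates at zero, and the proof amounts to a bookkeeping check that this is indeed what happens for this $\alpha$.
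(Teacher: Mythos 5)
Your proposal is correct and follows essentially the same route as the paper: both verify the hypothesis of Theorem \ref{dedieu}(i) via the computation $-K_Q\cdot C-\deg\mu_*D=(2m+2n)-(2m+2n-1)=1$ and then read off $\dim V_{0,\alpha}=-(K_Q+B)\cdot L+\gamma-1+|\alpha|=0$ from the dimension formula. Your additional checks (that $\mathcal{I}\alpha=L\cdot B$, that $p_a(L)=0$ when $m=1$ or $n=1$, and that $C$ is irreducible so only one component needs the numerical bound) are sound and, if anything, slightly more careful than the paper's own write-up.
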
 \begin{proof}
    Suppose that there exists $[C_Q]\in V_{0,\alpha}$. Since $m=1$ or $n=1$, we have that $C_Q$ is a smooth rational curve and whence it is isomorphic to its normalization. We have $\alpha_1=2$ and $\alpha_2=2m+2n-1$, so we let $Q_{1,1},Q_{1,2},Q_{2,1},\dots,Q_{2,2m+2n-1}$ be points in $C_Q$ such that, denoting by $\mu:C_Q\rightarrow Q$ the inclusion $C_Q\subset Q$, we have $\mu^*(B)=Q_{1,1}+Q_{1,2}+2Q_{2,1}+\dots+2Q_{2,2m+2n-1}$ as in Definition \ref{logsev}. We set $D=Q_{2,1}+\dots+Q_{2,2m+2n-1}$: by construction, $d:=\deg D_{|C''}=2m+2n-1$.
     We have that \begin{center}
        $-(K_{\mathbb{P}^1\times\mathbb{P}^1})\cdot C_Q-\deg\mu_*D_{|C_Q}=\mathcal{O}_Q(2,2)\cdot\mathcal{O}_Q(m,n)-(2n+2m-1)=2m+2n-2n-2m+1=1$,
    \end{center} where we set $D=Q_1+\dots+Q_{2n+2m-1}$. Then, by Theorem \ref{dedieu} \begin{center}
        $\dim(V_{0,\alpha})=-(K_{\mathbb{P}^1\times\mathbb{P}^1}+\mathcal{O}_Q(4,4))\cdot\mathcal{O}_Q(m,n)-1+|\alpha|=-2m-2n-1+2m+2n+1=0$.
    \end{center}
    \end{proof}

As announced in the introduction, our strategy will be to deform the $m$-special curves on the Enriques surfaces of base change type found by Hulek and Sch\"utt to rational curves on the very general Enriques surfaces, by exploiting the "regeneration" results about curves on K3 surfaces due to Chen, Gounelas and Liedtke.\par
Let $X_m\in\mathcal{F}_m$ be a general K3 surface covering an $m$-special Enriques surface $Y_m$. Denote by $B_{Y,m}\subset Y_m$ an $m$-special curve on $Y_m$ and let $f^{-1}(B_{Y,m})=s_1+s_2$. As stated in Theorem \ref{hs}, $s_1$ and $s_2$ are smooth $(-2)$ curves in $X_m$. Now, since $B_{Y,m}$ is a bisection for an elliptic pencil $F_1$ on $Y_m$ of arithmetic genus $m$, we have $\phi(B_{Y,m})=1$. Moreover, since $Y_m$ is nonnodal, by Lemma \ref{isotr} it is such that \begin{center}
    $B_{Y,m}\sim (m-1)E_1+E_2$,
\end{center}where $E_1$ and $E_2$ are two half-fibers such that $E_1\cdot E_2=1$ and $F_1\sim 2E_1$ is in fact the elliptic pencil of which $B_{Y,m}$ is a bisection. We consider the Horikawa representation of $X_m$ as double cover of a smooth quadric $Q$ described by Theorem \ref{modelbarth}, with the choice of $E_1$ and $E_2$ as half-fibers. $X_m$ admits the two 2:1 coverings \begin{center} $\pi:X_m\rightarrow Q$ and $f:X_m\rightarrow Y_m$. \end{center} We denote by $A_1:=f^*(E_1)$ and $A_2:=f^*(E_2)$ the elliptic pencils on $X_m$ given by the pullbacks of the half-fibers $E_1$ and $E_2$ respectively; moreover, we denote by $\tilde{A}_1\in|A_1|$ and $\tilde{A}_2\in|A_2|$ the preimages of the half-fibers $E_1$ and $E_2$ under $f$. With this notation, the pencils $A_1$ and $A_2$ are sent by $\pi$ to the rulings $\mathcal{O}_Q(1,0)$ and $\mathcal{O}_Q(0,1)$ respectively. 
    
\begin{proposition}\label{nonempty}
    For every $m\in\mathbb{Z}_+$, there exists a family of curves $\mathfrak{B}_m\subset|\mathcal{O}_Q(4,4)|$ such that for every $B_m\in\mathfrak{B}_m$, the logarithmic Severi variety $V_{0,\alpha}(\mathbb{P}^1\times\mathbb{P}^1,B_m,\mathcal{O}_Q(1,m-1))\neq\emptyset$, with $\alpha=[2,2m-1]$. 
\end{proposition}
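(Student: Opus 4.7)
The strategy is to deform the $m$-special Horikawa configuration so that exactly one of the simple tangencies between the projection of an $m$-special curve and the branch quartic opens up into two transverse intersections. To extract the initial pair, take $X_m\in\mathcal{F}_m$ general and use the Horikawa model $\pi:X_m\to Q$ associated to half-fibers $E_1,E_2$ with $B_{Y,m}\sim(m-1)E_1+E_2$ and $E_1\cdot E_2=1$, writing $f^{-1}(B_{Y,m})=s_1+s_2$. The projection formula yields $s_1\cdot A_1=B_{Y,m}\cdot E_1=1$ and $s_1\cdot A_2=B_{Y,m}\cdot E_2=m-1$, so $\pi|_{s_1}$ has degree $1$ and $C_Q^{(0)}:=\pi(s_1)$ is a smooth rational curve in $|\mathcal{O}_Q(1,m-1)|$ (after swapping the two rulings of $Q$ if necessary). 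Since $\pi^{-1}(C_Q^{(0)})=s_1+s_2$ splits, every intersection of $C_Q^{(0)}$ with the branch quartic $B_m^{(0)}$ has even multiplicity; combining $C_Q^{(0)}\cdot B_m^{(0)}=4m$ with the local double-cover relation $s_1\cdot s_2=\sum k_p/2$ (cf. the proof of Proposition \ref{c''}) and the nodality of $B_{Y,m}$ established in \cite{Pesa} forces $k_p=2$ for all $p$, at exactly $2m$ distinct points. Hence $C_Q^{(0)}\in V_{0,[0,2m]}(\mathbb{P}^1\times\mathbb{P}^1,B_m^{(0)},\mathcal{O}_Q(1,m-1))$.

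The heart of the argument is to deform this configuration so that exactly one of the $2m$ simple tangencies splits into two transverse crossings while the remaining $2m-1$ are preserved, producing a pair $(B_m,C_Q)$ in the stratum $[2,2m-1]$. Geometrically, this corresponds to smoothing a single one of the $2m$ nodes of the reducible rational curve $s_1+s_2$ on $X_m$, yielding an irreducible nodal rational curve of arithmetic genus $2m-1$ and class $(m-1)A_1+A_2$ on a K3 surface $X$ obtained by deforming $X_m$ in a direction transverse to $\mathcal{F}_m$ inside the $10$-dimensional moduli of K3 surfaces admitting an Enriques involution. This is precisely the type of regeneration statement proved by Chen, Gounelas and Liedtke, and I invoke their results here. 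Projecting the regenerated irreducible curve $C_X$ through the deformed Horikawa double cover produces $C_Q:=\pi(C_X)\in|\mathcal{O}_Q(1,m-1)|$, a rational curve meeting the deformed branch $B_m\in|\mathcal{O}_Q(4,4)|$ at $2m-1$ simple tangencies (one for each preserved node of $s_1+s_2$) and at $2$ transverse points (arising from the local smoothing of the broken node).

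Taking $\mathfrak{B}_m\subset|\mathcal{O}_Q(4,4)|$ to be the family of branch curves $B_m$ arising from these deformations, the constructed $C_Q$ gives a point of $V_{0,[2,2m-1]}(\mathbb{P}^1\times\mathbb{P}^1,B_m,\mathcal{O}_Q(1,m-1))$ for every $B_m\in\mathfrak{B}_m$, which proves the proposition. The main obstacle is the rigorous application of the Chen--Gounelas--Liedtke regeneration to the specific reducible curve $s_1+s_2$: one must verify that a generic first-order deformation of $X_m$ in the direction transverse to $\mathcal{F}_m$ smooths exactly one of the $2m$ nodes of $s_1+s_2$ (rather than none or several) and that the resulting irreducible curve on $X$ is honestly nodal and rational in class $(m-1)A_1+A_2$. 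The Severi-theoretic dimension counts provided by Lemma \ref{valpha} and Theorem \ref{dedieu} then ensure that the deformation has the correct local dimension to realize this single-node smoothing.
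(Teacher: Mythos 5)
Your first paragraph is, in substance, the paper's \emph{entire} proof: the paper likewise takes $X_m\in\mathcal{F}_m$ general, uses $B_{Y,m}\sim(m-1)E_1+E_2$ and the projection formula to show $\pi(s_1)\in|\mathcal{O}_Q(m-1,1)|$ with $\pi|_{s_1}$ birational, observes that $\pi^{-1}(\pi(s_1))=s_1+s_2$ splits so that every contact of $\pi(s_1)$ with the smooth branch curve has even order, and defines $\mathfrak{B}_m$ as the branch loci of the Horikawa models over $\mathcal{F}_m$. What you miss is that Definition \ref{logsev} does not require the points $Q_{i,j}$ to be pairwise distinct --- distinctness is a \emph{conclusion} of Theorem \ref{dedieu}(ii)(b) for general members, and Proposition \ref{c''} explicitly allows ``possibly coincident'' points. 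Hence the totally tangent curve, with $\mu^*B_m=2Q_1+\dots+2Q_{2m}$, already lies in $V_{0,[2,2m-1]}$ by taking $Q_{1,1}=Q_{1,2}=Q_1$ and $Q_{2,j}=Q_{j+1}$; this is exactly the content of the Remark following the proposition, which notes that the same curve also witnesses $V_{[0,2m]}\neq\emptyset$. No deformation is needed at this stage.

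Your second and third paragraphs --- opening one tangency into two transverse crossings by smoothing a single node of $s_1+s_2$ on a deformation transverse to $\mathcal{F}_m$ --- therefore prove more than the proposition asks, and as written they contain a genuine gap, one you yourself flag without closing: the assertion that ``a generic first-order deformation of $X_m$ transverse to $\mathcal{F}_m$ smooths exactly one of the $2m$ nodes'' does not follow from the result of \cite{CGL} you invoke. Theorem \ref{cgl} is only a dimension lower bound $\dim M\geq p+\dim\mathcal{V}$ on components of the stable-map moduli over a family; to extract actual sideways deformation one must run it over a one-parameter family and combine it with the fiberwise dimension-zero count (Lemma \ref{valpha}) to force dominance over the base, and then irreducibility of the regenerated curve and the contact profile $[2,2m-1]$ are deduced \emph{a posteriori} from the absence of $(-2)$-curves on the very general K3 with Enriques involution (Remark \ref{no-2}, Lemma \ref{nosplit}) together with Proposition \ref{c''}; one never controls directly which, or how many, nodes smooth, and first-order transversality alone produces no curves at all. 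This is precisely the architecture of the paper's proof of Theorem \ref{main}, so your version folds an unproved sketch of that later argument into Proposition \ref{nonempty}, where it is not needed. (Two details on the credit side: your parity argument that $\pi|_{s_1}$ has degree $1$ is cleaner than the paper's justification, and your use of the nodality of $B_{Y,m}$ from \cite{Pesa} to pin down $2m$ distinct simple tangencies is a correct refinement that the paper's own proof does not require and does not verify.)
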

\begin{proof}
Since the $m$-special curve $B_{Y,m}$ is such that $B_{Y,m}\sim (m-1)E_1+E_2$ on $Y_m$, we have $(s_1+s_2)\sim (m-1)\tilde{A}_1+\tilde{A}_2$. Moreover, we have $\pi_*(\tilde{A}_1)=2\mathcal{O}_Q(1,0)$ and $\pi_*(\tilde{A}_2)=2\mathcal{O}_Q(0,1)$, from which $\pi_*((m-1)\tilde{A}_1+\tilde{A}_2)=\mathcal{O}_Q(2m-2,2)$, so that also $\pi_*(s_1+s_2)=\mathcal{O}_Q(2m-2,2)$. But $s_1$ and $s_2$ are sections for the elliptic pencil $A_1$, whence they are sent to sections of the ruling given by $\mathcal{O}_Q(1,0)$. Hence, $\pi(s_1)\in|\mathcal{O}_Q(k,1)|$ and $\pi(s_1)\in|\mathcal{O}_Q(2m-2-k,1)|$. We want to show that $k=m-1$. We have \begin{center}
    $B_{Y,m}\cdot E_2=((m-1)E_1+E_2)\cdot E_2=m-1$,\end{center} from which \begin{center}
    $f^*(B_{Y,m})\cdot f^*(E_2)=(s_1+s_2)\cdot A_2=2m-2$.
\end{center}Furthermore, \begin{center}
    $s_1\cdot A_2=s_1\cdot f^*(E_2)=f_*(s_1)\cdot E_2=B_{Y,m}\cdot E_2=m-1$.
\end{center}But $s_1\cdot A_2$ is also such that \begin{center}
    $s_1\cdot A_2=s_1\cdot\pi^*(\mathcal{O}_Q(0,1))=\pi_*(s_1)\cdot\mathcal{O}_Q(0,1)=\pi(s_1)\cdot\mathcal{O}_Q(0,1)$,
\end{center}from which \begin{center}
    $\pi(s_1)\sim\mathcal{O}_Q(m-1,1)$.
\end{center}
The equality $\pi_*(s_1)\cdot\mathcal{O}_Q(0,1)=\pi(s_1)\cdot\mathcal{O}_Q(0,1)$ holds since $s_1$ is a $(-2)$-curve on $X_m$ and by Lemma \ref{bsmooth} the branch locus $B_{E_1,E_2}\in|\mathcal{O}_Q(4,4)|$ is smooth, so that the image under $\pi$ of $s_1$ is a rational curve intersecting $B_{E_1,E_2}$ with even order at every intersection point and thus it splits on $X_m$, or, equivalently, $\pi$ restricted to $s_1$ is birational.\par The family $\mathfrak{B}_m\subset|\mathcal{O}_Q(4,4)|$ consists of the branching curves corresponding to the described model for every $X_m\in\mathcal{F}_m$.
\end{proof}
\begin{remark}
    We point out that if $B_m\in\mathfrak{B}_m$, we have that $V_{\alpha,B_m}\neq\emptyset$ even for $\alpha=[0,2m]$, but for our purpose the nonemptiness of $V_{\alpha,B_m}$ for $\alpha=[2,2m-1]$ is sufficient. 
\end{remark}

To prove Theorem \ref{main}, we need the following definitions and results about stable maps on families of K3 surfaces. We follow \cite{Ko} for the original Definitions of stable maps and their moduli space and \cite{CGL} for the regeneration results about curves on families of K3 surfaces.\par

\begin{definition}
Let $V$ be a complex algebraic variety.\\
    A stable map is a structure $(C,x_1,\dots,x_k,f)$ consisting of a connected compact reduced curve $C$ with $k\geq 0$ pairwise distinct marked nonsingular points $x_i$ and at most ordinary double singular points, and a map $f:C\rightarrow V$ having no nontrivial first order infinitesimal automorphisms, identical on $V$ and $x_1,\dots,x_k$.
\end{definition}
The condition of stability means that every irreducible component of $C$ of genus 0
 (resp. 1) which maps to a point must have at least 3 (resp. 1) special (i.e. marked
 or singular) points on its normalization.
There is a notion of isomorphism of stable maps and of moduli space of stable maps due to Kontsevich: $\overline{M}_{p}(V,\beta)$ denotes the moduli stack of stable maps to $V$ of curves of arithmetic genus $p\geq 0$ with $k\geq 0$ marked points such that $f_*[C]=\beta$.\par
 The next result, due to Chen, Gounelas and Liedtke, is our main tool to prove Theorem \ref{main}.

 \begin{theorem}\label{cgl}
     Let $\mathcal{F}\rightarrow\mathcal{V}$ be a smooth projective family of K3 surfaces over an irreducible base scheme $\mathcal{V}$ and let $\mathcal{H}$ be a line bundle on $\mathcal{F}$. Then, every irreducible component $M\subset \overline{M}_{p}(\mathcal{F}/\mathcal{V},\mathcal{H})$ satisfies \begin{center}
         $\dim M\geq p+\dim\mathcal{V}$.
     \end{center}
 \end{theorem}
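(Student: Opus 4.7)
The proof is a deformation-theoretic dimension count for the moduli of stable maps, relying crucially on the K3 hypothesis (trivial canonical bundle and holomorphic symplectic form). My plan is to fix a point $[\mu\colon C\to X_0]\in M$, where $X_0$ is the fiber of $\mathcal{F}\to\mathcal{V}$ over some $v_0\in\mathcal{V}$, and to show that the local dimension of $M$ at $[\mu]$ is at least $p+\dim\mathcal{V}$; irreducibility of $M$ then propagates this bound globally.

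For the fiber dimension (keeping $X_0$ fixed), I would use the standard tangent--obstruction complex of $\overline{M}_p(X_0,\mathcal{H}_0)$ expressed in terms of the normal sheaf $N_\mu$: the Zariski tangent space at $[\mu]$ is $H^0(N_\mu)$ and the obstruction lies in $H^1(N_\mu)$. A Riemann--Roch computation on the nodal domain $C$ of arithmetic genus $p$, together with the triviality $K_{X_0}\cong\mathcal{O}_{X_0}$, yields $\chi(N_\mu)=p-1$; the extra $+1$ coming from the holomorphic symplectic form on $X_0$ then produces the Severi-type bound $\dim\geq p$ for each component of the fiber. This is the K3 Severi-variety estimate of Mumford, Tannenbaum and Chiantini--Sernesi, adapted to stable maps rather than embedded curves.

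To upgrade the statement to families, I would analyze the projection $M\to\mathcal{V}$. Each first-order deformation of $X_0$ along $\mathcal{V}$ corresponds to a class in $T_{v_0}\mathcal{V}\subseteq H^1(T_{X_0})$, and the obstruction for the stable map $[\mu]$ to lift with it sits in a cohomology group that, via Serre duality together with the symplectic identification $T_{X_0}\cong\Omega^1_{X_0}$, can be made to vanish by a semi-regularity argument in the spirit of Bloch--Ran--Buchweitz--Flenner. Consequently the tangent map $T_{[\mu]}M\to T_{v_0}\mathcal{V}$ is surjective, and combined with the $p$-dimensional kernel coming from the fiber deformations one obtains $\dim M\geq p+\dim\mathcal{V}$.

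The hard part will be to carry out this count uniformly over the boundary strata of $\overline{M}_p(\mathcal{F}/\mathcal{V},\mathcal{H})$, where $C$ may be reducible nodal and $\mu$ may contract components or fail to be an immersion. In those cases one must replace $N_\mu$ by its logarithmic version along the nodes, and re-verify both the obstruction vanishing and the expected-dimension count even when $\mu$ is no longer a closed immersion. This delicate stratification is the technical core of \cite{CGL}, and in the present work the theorem is invoked as a black box without independent proof.
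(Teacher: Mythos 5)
First, a point of comparison: the paper does not prove this statement at all --- its ``proof'' is the single line ``See \cite[Theorem 2.11]{CGL}'', so your closing remark that the theorem is invoked as a black box is exactly right, and your sketch has to be measured against the actual argument in \cite{CGL} rather than against anything in this paper. Your fiberwise dimension count is in order: for an immersion $\mu$ from a nodal curve of arithmetic genus $p$ to a K3, triviality of the canonical bundle gives $\chi(N_\mu)=p-1$, and the symplectic form upgrades this to the Severi-type bound $\dim\geq p$ on each fiber component. The problem is your relative step.

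The genuine gap is the claim that semiregularity makes the tangent map $T_{[\mu]}M\rightarrow T_{v_0}\mathcal{V}$ surjective. This is both unavailable and, as stated, circular: the theorem asserts the bound $\dim M\geq p+\dim\mathcal{V}$ for \emph{every} irreducible component, including components that do not dominate $\mathcal{V}$ (e.g.\ components supported over Noether--Lefschetz-special subloci, built from multiple covers or maps with contracted components, for which Bloch-style semiregularity of embedded l.c.i.\ curves simply does not apply --- this is precisely the multiple-cover difficulty that forces the use of reduced obstruction theory in Gromov--Witten theory). For a non-dominant component your kernel-plus-image count gives at best $p+\dim(\mathrm{image})$, which is too weak. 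The mechanism in \cite{CGL} is different and handles all components uniformly: the standard relative obstruction theory for stable maps bounds every component below by the expected dimension $p-1+\dim\mathcal{V}$, and the extra $+1$ is gained not fiberwise but by enlarging the base --- one embeds $\mathcal{V}$ in codimension one inside a larger deformation space $\mathcal{V}'$ of the K3 fibers in which $\mathcal{H}$ remains a Hodge class exactly along $\mathcal{V}$ (a Noether--Lefschetz/twistor-type argument going back to Mumford); since stable maps in class $\mathcal{H}$ can only exist over the Hodge locus, $M$ is also a component of the moduli space over $\mathcal{V}'$, whence $\dim M\geq p-1+\dim\mathcal{V}'=p+\dim\mathcal{V}$. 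Note also that the fiberwise ``$+1$ from the symplectic form'' that you take as known is itself usually proved by this same trick (or by the reduced class), so your sketch implicitly assumes the hard point rather than establishing it.
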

 \begin{proof}
     See \cite[Theorem 2.11]{CGL}.
 \end{proof}
 We are able to prove Theorem \ref{main}.
\begin{proof} [Proof of Theorem \ref{main}]

 We consider a one-dimensional family $\mathcal{Q}$ over a curve $\mathcal{C}$ such that for every $t\in\mathcal{C}$, \begin{center}
    $\mathcal{Q}_t\cong Q\cong\mathbb{P}^1\times\mathbb{P}^1$.
\end{center}Moreover, we consider the line bundle over $\mathcal{Q}$ \begin{center}
    $\mathcal{R}:=(\mathcal{Q},\mathcal{O}_{\mathcal{Q}}(4,4))$
\end{center}defined in the following way: for every $t\in\mathcal{C}$, the restriction of $\mathcal{R}$ to $\mathcal{Q}_t$ is \begin{center}
    $\mathcal{R}_t\cong(\mathcal{Q}_t,\mathcal{O}_{\mathcal{Q}_t}(4,4))$.
\end{center}
For every $m\in\mathbb{Z}_+$, we consider another line bundle $\mathcal{H}^m_{\mathcal{Q}}$ over $\mathcal{Q}$ defined in the following way: for every $t\in C$, the restriction of $\mathcal{H}^m_{\mathcal{Q}}$ to $\mathcal{Q}_t$ is \begin{center}
    $\mathcal{H}^m_{{\mathcal{Q}},t}\cong(\mathcal{Q}_t,\mathcal{O}_{\mathcal{Q}_t}(1,m-1))$.
\end{center}
Moreover, we fix a point $t_0\in\mathcal{C}$ and consider a curve $B_m\in\mathcal{R}_{t_0}=(\mathcal{Q}_{t_0},\mathcal{O}_{\mathcal{Q}_{t_0}}(4,4))$ such that $V_{0,\alpha}(\mathcal{Q}_{t_0},B_m,\mathcal{O}_{\mathcal{Q}_{t_0}}(1,m-1))\neq\emptyset$ (then a curve $B_m$ belonging to the family $\mathfrak{B}_m\subset|\mathcal{O}_{\mathcal{Q}_{t_0}}(4,4)|$ as in Proposition \ref{nonempty}). Finally, we let $\mathcal{W}$ with

\begin{center} \begin{tikzcd} 
  \mathcal{W} \arrow{r} \arrow{dr}  & \mathcal{R} \arrow{d}\\
& \mathcal{C}
  \end{tikzcd} \end{center}
 be a section of the relative line bundle $\mathcal{R}$, such that $\mathcal{W}_{t_0}=B_m$.

Let now $\mathcal{X}\rightarrow\mathcal{C}$ be the family over $\mathcal{C}$ of K3 surfaces such that for every $t\in\mathcal{C}$,
$\mathcal{X}_t$ is the K3 surface obtained as the double cover of $\mathcal{Q}_t$ ramified over $\mathcal{S}_t$ and let $\pi$ 

\begin{center} \begin{tikzcd} 
  \mathcal{X} \arrow{r}{\pi} \arrow{dr}  & \mathcal{Q} \arrow{d}\\
& \mathcal{C}
  \end{tikzcd} \end{center}
be the relative double covering. 
For every $m\in\mathbb{Z}_+$, we define the line bundle on $\mathcal{X}$ \begin{center}
$\mathcal{H}^m_\mathcal{X}:=\pi^*\mathcal{H}^m_\mathcal{Q}$
\end{center}to be the pullback of $\mathcal{H}^m_\mathcal{Q}$ via $\pi$. In other words, the restriction of $\mathcal{H}^m_{\mathcal{X}}$ to every $\mathcal{X}_t$ is the pullback of $(\mathcal{Q}_t,\mathcal{O}_{\mathcal{Q}_t}(1,m-1))$. \par Since $V_{0,\alpha}(\mathcal{Q}_{t_0},B_m,\mathcal{O}_{\mathcal{Q}_{t_0}}(1,m-1))\neq\emptyset$, the line bundle $\mathcal{H}^m_{{\mathcal{X}},t_0}$ admits a special member consisting of two smooth rational curves, that we call $s_1$ and $r_1$. \\ 
Let now $L$ be the union of two copies of $\mathbb{P}^1$ meeting at one point and consider the map $f:L\rightarrow \mathcal{X}_{t_0}$ such that $f(L)=s_1+r_1$. Since there are no contracted components, $(f,L)$ is a stable map.
Now, consider the moduli stack \begin{center}
    $\overline{M}_{o}(\mathcal{X}/\mathcal{C},\mathcal{X}_{\mathcal{H}})$.
\end{center} We proved that it is nonempty, whence by Theorem \ref{cgl} every irreducible component of it has dimension $\dim\mathcal{C}$, that is 1. By Lemma \ref{valpha}, the fibers over the points of $\mathcal{C}$ have dimension $0$, thus, for every $t$, the K3 surface $\mathcal{X}_t$ is such that the line bundle $\mathcal{X}_{\mathcal{H},t}$ has a rational member, that is sent to a rational curve on $\mathcal{Q}_t$. \par We actually proved that for every $t\in\mathcal{C}$, the curve $\mathcal{W}_t\in|\mathcal{O}_{\mathcal{Q}_t}(4,4)|$ is such that $V_{0,\alpha}(\mathcal{Q}_t,\mathcal{W}_t,\mathcal{O}_{\mathcal{Q}_t}(1,m-1))\neq\emptyset$. Since $\mathcal{W}$ is an arbitrary section of the relative line bundle $\mathcal{R}$ over $\mathcal{Q}$, the nonemptiness of $V_{0,\alpha}(Q,B,\mathcal{O}_Q(1,m-1))$ holds in particular for every $B\in|\mathcal{O}_Q(4,4)|$ such that the double cover of $Q$ ramified over $B$ is a K3 surface with an Enriques involution. Let then $B$ be such a curve and let $C_{Q,m}\in V_{0,\alpha}(Q,B,\mathcal{O}_Q(1,m-1))$. As in the proof of Lemma \ref{nosplit}, the very general K3 surface covering an Enriques surface does not admit smooth $(-2)$-curves, whence the smooth rational curve $C_{Q,m}$ does not split and in particular it is not totally tangent to $B$. This implies that the preimage $C_{X,m}:=\pi^{-1}(C_{Q,m})$ is irreducible. Moreover, by Lemma \ref{lemc'}, the Enriques involution identifies $C_{X,m}$ with another rational curve $C'_{X,m}$ linearly equivalent to $C_{X,m}$; let us denote by $C'_{Q,m}$ the image of $C'_{X,m}$ under $\pi$ and by $C_{Y,m}$ the image of $C_{X,m}$ (and of $C'_{X,m})$ under $f$. We have that the arithmetic genus of $C_{Y,m}$ is
\begin{center}
    $p_a(C_{Y,m})=\frac{1}{2}(C_{Y,m})^2+1=\frac{1}{4}(C_{X,m}+C'_{X,m})^2+1$,
\end{center}
and 
\begin{center}
    $(C_{X,m}+C'_{X,m})^2=2(C_{Q,m}+ C'_{Q,m})^2=2((\mathcal{O}_Q(1,m-1))^2+(\mathcal{O}_Q(1,m-1))^2+2(\mathcal{O}_Q(1,m-1))^2)=16m-16$,
\end{center}so that, finally,
\begin{center}
    $p_a(C_{Y,m})=\frac{1}{4}(16m-16)+1=4m-3$.
\end{center} The $\phi$-invariant of $C_{Y,m}$ is 2 and it is computed by the half-fiber $E_1$ on $Y$ induced by the ruling $\mathcal{O}(0,1)$ of $Q$. Indeed, we have \begin{center} $C_{Y,m}\cdot E_1=\frac{1}{2}(C_{X,m}+C'_{X,m})\cdot\pi^*E_1=(C_{Q,m}+C'_{Q,m})\cdot\mathcal{O}(0,1)=$\\
$=\mathcal{O}(2,2m-2)\cdot\mathcal{O}(0,1)=2$.\end{center}
\end{proof}

  \end{document}